 \newtheorem{Theorem}{Theorem}[section]
 \newtheorem{Corollary}[Theorem]{Corollary}
 \newtheorem{Lemma}[Theorem]{Lemma}
 \newtheorem{Proposition}[Theorem]{Proposition}
 \newtheorem{Problem}[Theorem]{Problem}
 \newtheorem{Remark}[Theorem]{Remark}
 \numberwithin{equation}{section}
\begin{document}

\title[multiplier ideal sheaf with Lelong number one weight]
 {{Characterization of multiplier ideal sheaves with weights of Lelong number one}}

\author{Qi'an Guan}
\address{Qi'an Guan: School of Mathematical Sciences, and Beijing International Center for Mathematical Research,
Peking University, Beijing, 100871, China.}
\email{guanqian@amss.ac.cn}
\author{Xiangyu Zhou}
\address{Xiangyu Zhou: Institute of Mathematics, AMSS, and Hua Loo-Keng Key Laboratory of Mathematics, Chinese Academy of Sciences, Beijing, China}
\email{xyzhou@math.ac.cn}

\thanks{The authors were partially supported by NSFC}

\subjclass{}

\keywords{$L^2$ extension theorem,
plurisubharmonic function, Lelong number, multiplier ideal sheaf}

\date{\today}

\dedicatory{}

\commby{}


\begin{abstract}
{In this article,
we characterize plurisubharmonic functions of Lelong number one at the origin,
such that the germ of the associated multiplier ideal sheaf is nontrivial:
in arbitrary complex dimension,
their singularity must be the sum of a germ of smooth divisor and of a plurisubharmonic function with zero Lelong number.
We also present a new proof of the related well known integrability criterion due to Skoda.}
\end{abstract}

\maketitle
\section{Introduction}

Let $\Omega$ be a domain in $\mathbb{C}^{n}$, and $x_{0}\in\Omega$.
Let $u$ be a plurisubharmonic function on $\Omega$.
Following Nadel \cite{Nadel90},
one can define the multiplier ideal sheaf $\mathcal{I}(u)$ to
be the sheaf of germs of holomorphic functions $f$ such that
$|f|^{2}e^{-2u}$ is locally integrable.
Here $u$ is regarded as the weight of $\mathcal{I}(u)$.

In \cite{skoda72}, when Lelong number $\nu(u,x_{0})<1$,
Skoda characterized the structure of $\mathcal{I}(u)_{x_{0}}$:

\begin{Theorem}
\label{t:skoda}(\cite{skoda72}, see also \cite{demailly-note2000,demailly2010})
If $\nu(u,x_{0})<1$, then $\mathcal{I}(u)_{x_{0}}=\mathcal{O}_{x_{0}}$.
\end{Theorem}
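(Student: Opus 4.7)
The plan is to show $1\in \mathcal{I}(u)_{x_{0}}$, i.e.\ that $e^{-2u}$ is integrable on some small ball around $x_{0}$. After translation we may take $x_{0}=0$, and after subtracting a constant we may assume $u\le 0$ on $B:=B(0,R)$. By the layer-cake formula,
\[
\int_{B(0,R/2)} e^{-2u}\,dV = |B(0,R/2)| + 2\int_{0}^{\infty} e^{2t}\bigl|\{z\in B(0,R/2):u(z)<-t\}\bigr|\,dt,
\]
so the task reduces to a sublevel-set estimate $|\{u<-t\}|\lesssim e^{-\beta t}$ for some $\beta>2$ and all large $t$.

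Fix $\alpha$ with $\nu(u,0)<\alpha<1$. Viewing $u$ as a subharmonic function on $\mathbb{R}^{2n}$, let $\mu:=c_{n}\Delta u\ge 0$ denote its Riesz mass; the Lelong-number hypothesis is equivalent (up to normalization) to the mass-growth bound $\mu(B(0,\rho))\le\alpha\,\rho^{2n-2}$ for all $\rho$ sufficiently small. The Poisson--Jensen representation on $B$ writes $u(z)$ as a Poisson integral of $u|_{\partial B}$ minus the Green potential of $\mu$, so on $B(0,R/2)$ one obtains the pointwise bound
\[
-u(z) \le C_{1} + \int_{B} G_{R}(z,w)\,d\mu(w),
\]
where $C_{1}$ bounds the (now harmonic and bounded) Poisson integral and $G_{R}$ is the positive Green kernel of $B$ with the standard isotropic singularity of order $|z-w|^{2-2n}$ (logarithmic for $n=1$). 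Hence $\{u<-t\}\subset\{G_{R}\mu > t-C_{1}\}$, and the problem reduces to an exponential tail bound for the Green potential of $\mu$.

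The required bound $|\{G_{R}\mu > t\}|\le C e^{-2t/\alpha}$, which delivers $\beta=2/\alpha>2$, is obtained by Chebyshev applied to $\int_{B(0,R/2)}\exp(p\,G_{R}\mu(z))\,dV(z)$ for $p$ slightly smaller than $2/\alpha$. Swapping integration order and decomposing $\mu$ dyadically in concentric annuli around $0$, one reduces to integrals of the form $\int_{B(0,R/2)}\exp(p\,G_{R}(z,w))\,dV(z)$ weighted against the dyadic pieces of $\mu$; these are finite exactly because the subcritical mass-growth $\mu(B(0,\rho))\le\alpha\,\rho^{2n-2}$ with $\alpha<1$ keeps the exponent below the critical volume growth $\rho^{2n}$. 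Substituting back into the layer-cake formula yields $\int e^{-2u}\,dV<\infty$, whence $\mathcal{I}(u)_{x_{0}}=\mathcal{O}_{x_{0}}$. The main obstacle is the sharp execution of this last exponential-integrability estimate, which is the analytic heart of the result: the threshold $\alpha<1$, i.e.\ $\nu(u,x_{0})<1$, is precisely what separates the integrable from the divergent regime in Skoda's theorem.
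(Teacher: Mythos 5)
Your strategy is a classical potential‑theoretic one — Riesz decomposition of $u$ on $\mathbb{R}^{2n}$, reduction of integrability to a sublevel‑set estimate via the layer‑cake formula, and an exponential moment bound for the Green potential of $\mu=c_n\Delta u$ obtained by Jensen/Fubini. This is genuinely different from the paper's argument, which proves the theorem by applying the Ohsawa--Takegoshi extension theorem ``dynamically'': for each radius $r$ and each direction $z_2$, one extends the constant $1$ from the point $rz_2$ on a complex line, obtains $\int_D|F|^2e^{-2u}\leq C_De^{-2u(rz_2)}$, observes that non‑integrability of $e^{-2u}$ forces $F(x_0)=0$, and then invokes the elementary one‑variable lower bound $\int_\Delta|F|^2>C_1|r|^{-2}$ (Lemma~5.1) to deduce $u(rz_2)<\log r+C$ along every ray. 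That yields $\nu(u|_L,x_0)\geq1$ for every line $L$, contradicting $\nu(u,x_0)<1$.

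However, your proposal has a genuine gap at the ``analytic heart'' you acknowledge. After the dyadic decomposition and the implicit Jensen step, you reduce to the finiteness of $\int_{B(0,R/2)}\exp\bigl(p\,G_R(z,w)\bigr)\,dV(z)$ weighted against $\mu$. For $n=1$ the Green kernel is logarithmic and one indeed gets $\int|z-w|^{-p\|\mu\|}dV(z)<\infty$ whenever $p\|\mu\|<2$; this is the classical one‑variable argument, and the threshold $\alpha<1$ is exactly what it needs. But for $n\geq2$ the Green kernel of the ball in $\mathbb{R}^{2n}$ has the power‑law singularity $G_R(z,w)\sim c_n|z-w|^{2-2n}$, and $\int_{|z-w|<\varepsilon}\exp\bigl(p\,c_n|z-w|^{2-2n}\bigr)\,dV(z)=\infty$ for every $p>0$. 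So the key pointwise integrals that your Chebyshev/moment argument relies on are not finite, and the ``subcritical mass growth keeps the exponent below the critical volume growth'' heuristic does not rescue the estimate: it is not a matter of the mass constant $\alpha$ being small, but of the single‑point kernel already being exponentially non‑integrable in real dimension $\geq4$. The reason the potential of a genuine psh trace measure (e.g.\ $\mu=\alpha[\{z_1=0\}]$, giving $u=\alpha\log|z_1|$) in fact only has a logarithmic singularity is that the mass is spread along a $(2n-2)$‑dimensional set; Jensen's inequality discards this structure and effectively replaces the spread mass by a Dirac, which is the worst — and divergent — case for $n\geq2$. To make a potential‑theoretic proof go through for $n\geq2$ one must exploit the complex structure rather than the isotropic real Green kernel: the standard route (close to Skoda's original argument) is to slice $\mathbb{C}^n$ by complex lines through $x_0$, run the one‑variable logarithmic‑kernel argument on each slice, and reassemble via Fubini in the polar decomposition $dV=r^{2n-1}dr\,d\sigma$; the paper instead replaces all of this with the $L^2$ extension theorem. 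As written, your proof establishes the $n=1$ case but not the general statement.
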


A natural question is: If $\nu(u,x_{0})=1$, what is the structure of $\mathcal{I}(u)_{x_{0}}$?

For dimension $n\geq2$,
when we choose plurisubharmonic function $u=\log|z|$,
then $\nu(u,x_{0})=1$ and $\mathcal{I}(u)_{x_{0}}=\mathcal{O}_{x_{0}}$.

It is known that:
if $u=\log|z_{1}|$ ($z_{1}$ is a coordinate function near $x_{0}=(0,\cdots,0)$),
then $\nu(u,x_{0})=1$ and $\mathcal{I}(u)_{x_{0}}=\mathcal{I}(\log|z_{1}|)_{x_{0}}$.

Considering the above examples,
it is natural to ask the following
\begin{Problem}
\label{p:skoda}
Let $u$ be a plurisubharmonic function on $\Omega\subset\mathbb{C}^{n}$ satisfying $\nu(u,x_{0})=1$.
Can one obtain $\mathcal{I}(u)_{x_{0}}=\mathcal{O}_{x_{0}}$ or
$\mathcal{I}(u)_{x_{0}}=\mathcal{I}(\log|h|)_{x_{0}}$,
where $h$ is a defining function of a germ of regular complex hypersurface through $x_{0}$?
\end{Problem}

When $n=2$,
Blel and Mimouni obtained the following
\begin{Theorem}
\label{t:BM}\cite{B-M}
Let $u$ be a plurisubharmonic function on $\Omega\subset\mathbb{C}^{2}$ satisfying

$(1)$ $u\in L^{\infty}_{loc}(\Omega\setminus \{x_{0}\})$, where $x_{0}\in\Omega$;

$(2)$ $\nu(u,x_{0})\leq1$.

Then $e^{-2u}$ is integrable near $x_{0}\in\Omega$.
\end{Theorem}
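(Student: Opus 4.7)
\smallskip

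The strategy is to reduce Theorem~\ref{t:BM} to Theorem~\ref{t:skoda} by an iterated blowup of the singular point, exploiting the specific geometry of complex dimension two. The case $\nu(u,x_{0})<1$ is immediate from Theorem~\ref{t:skoda}, so I would treat only $\nu(u,x_{0})=1$. After translation assume $x_{0}=0$, and pick a ball $B\subset\Omega$ on which $u\leq 0$; the goal is then to show $\int_{B} e^{-2u}\,dV<\infty$.

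Let $\pi\colon\widehat B\to B$ be the blowup of the origin, with exceptional divisor $E\simeq\mathbb{P}^{1}$ and local defining function $f_{E}$. In dimension two $|\det J_{\pi}|=|f_{E}|$, so the change-of-variable formula rewrites the integral as
\[
\int_{B} e^{-2u}\,dV=\int_{\widehat B} e^{-2(\pi^{*}u-\log|f_{E}|)}\,dV.
\]
Since $\nu(u,0)=1$ equals the generic Lelong number of $\pi^{*}u$ along $E$, the Siu decomposition shows that $\tilde u:=\pi^{*}u-\log|f_{E}|$ is plurisubharmonic near $E$, with Lelong number zero at generic points of $E$. The isolated-singularity hypothesis $u\in L^{\infty}_{\mathrm{loc}}(\Omega\setminus\{0\})$ forces $\tilde u$ to be locally bounded away from finitely many points $p_{1},\dots,p_{N}\in E$. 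At generic points of $E$ and at any $p_{j}$ with $\nu(\tilde u,p_{j})<1$, Theorem~\ref{t:skoda} already yields local integrability of $e^{-2\tilde u}$. For the remaining $p_{j}$ I would iterate the construction: blow up $p_{j}$, perform the same subtraction, and continue with the resulting plurisubharmonic function.

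The main obstacle is to show that this recursive procedure terminates after finitely many steps. One needs a discrete invariant under blowup---for instance a suitable combination of the Lelong numbers at the worst remaining singularities and the multiplicities of the composed exceptional divisor---that strictly decreases at every step and ultimately forces every Lelong number below one, so that a final application of Theorem~\ref{t:skoda} closes the argument. The hypothesis $\nu(u,x_{0})\leq 1$ enters critically each time, since it is precisely what permits the Siu subtraction to produce a \emph{plurisubharmonic} (rather than merely upper semicontinuous) function, keeping Lelong numbers well defined through the iteration. The restriction to $\mathbb{C}^{2}$ is likewise essential: at every stage the locus where the Lelong number is $\geq 1$ is an analytic subset of a compact curve, hence finite, which keeps the bookkeeping manageable; in higher dimensions this locus may be positive-dimensional, which is what makes Problem~\ref{p:skoda} genuinely harder in general.
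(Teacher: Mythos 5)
The paper does not actually supply a proof of Theorem~\ref{t:BM}: it is invoked as a black box from Blel--Mimouni \cite{B-M}, and is used as the inductive base for the first proof of Theorem~\ref{t:Lelong_lct}. So there is no internal argument to compare against, and your proposal must be judged on its own.

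The blowup framework you set up is reasonable: the change-of-variables identity
$\int_{B}e^{-2u}\,dV=\int_{\widehat B}e^{-2(\pi^{*}u-\log|f_{E}|)}\,d\widehat V$
is correct (in $\mathbb{C}^{2}$ the real Jacobian is $|f_{E}|^{2}$), the fact that the generic Lelong number of $\pi^{*}u$ along $E$ equals $\nu(u,x_{0})$ is classical, and when $\nu(u,x_{0})=1$ the Siu decomposition $dd^{c}\pi^{*}u=[E]+R$ with $R\geq 0$ does show $\tilde u=\pi^{*}u-\log|f_{E}|$ is psh. The reduction to finitely many points of $E$ where $\nu(\tilde u,\cdot)\geq 1$, via upper semicontinuity and Siu's analyticity of Lelong sublevel sets, is also sound.

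The proof, however, is genuinely incomplete, and in two respects. First, and as you yourself flag, you never exhibit the terminating invariant. This is not a loose end one can wave at: controlling the infinite tree of infinitely near points is precisely the content of the Favre--Jonsson valuative-tree machinery that the paper cites (\cite{FM-book04,FM05j}), and there is no elementary reason why the recursion must stop after finitely many levels. Simple examples like $u=\frac{1}{2}\log(|z_{1}|^{2}+|z_{2}|^{2N})$ already require $N$ blowups, with $N$ unbounded, so the depth truly depends on $u$; showing it is always finite is the theorem's real difficulty. Second, you need an a priori bound on $\nu(\tilde u,p_{j})$ before the recursion can even be set up correctly. If $\nu(\tilde u,p_{j})\geq 2$ at some $p_{j}$, the next subtraction leaves a residual generic Lelong number $\geq 1$ along the new exceptional curve, and $e^{-2}$ of that is non-integrable along an entire divisor, so the argument would actually \emph{disprove} the theorem. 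What one must know is that $\nu(\pi^{*}u,p)<2\,\nu(u,x_{0})$ (a Kiselman-type directional bound), and you neither state nor prove this. Finally, a small but telling misstatement: you write that the hypothesis $\nu(u,x_{0})\leq 1$ is what makes the Siu subtraction produce a psh function. It is the opposite inequality, $\nu\geq 1$, that guarantees $dd^{c}\pi^{*}u-[E]\geq 0$; the bound $\nu\leq 1$ is used to control the badness, not the positivity. The confusion here suggests the mechanism of the inductive step has not been pinned down, which is consistent with the missing termination argument.
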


When $n=2$, Favre and Jonsson \cite{FM05j} used the valuative tree (see \cite{FM-book04})
to give an affirmative answer to Problem \ref{p:skoda}:

\begin{Theorem}
\label{t:Lelong_lct_dim2}\cite{FM05j}
Let $u$ be a plurisubharmonic function on $\Omega\subset\mathbb{C}^{2}$ satisfying that
$\nu(u,x_{0})=1$ and $(\{z|\nu(u,z)\geq 1\},x_{0})$ is not a germ of regular complex hypersurface,
then $e^{-2u}$ is integrable near $x_{0}\in\Omega$.
\end{Theorem}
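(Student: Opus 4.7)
The plan is to prove the contrapositive: assuming $e^{-2u}$ is not integrable at $x_0$, I will show that $(\{z:\nu(u,z)\ge 1\},x_0)$ is the germ of a smooth complex hypersurface.

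I begin with a Siu decomposition of $dd^c u$ in a small neighbourhood of $x_0$,
\[
u = \sum_j \lambda_j \log|f_j| + u_R,
\]
where the $f_j$ are the irreducible local equations of the hypersurface components of $dd^c u$ passing through $x_0$, $\lambda_j>0$, and the residual plurisubharmonic function $u_R$ has generic Lelong number zero along every hypersurface; by Siu's semicontinuity the sets $\{z:\nu(u_R,z)\ge c\}$ then have codimension at least~$2$, hence are discrete in dimension~$2$. The identity
\[
\nu(u,x_0)=\sum_j\lambda_j\operatorname{ord}_{x_0}(f_j)+\nu(u_R,x_0)=1
\]
forces $\lambda_j\le 1/\operatorname{ord}_{x_0}(f_j)\le 1$ for every~$j$.

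I split into two cases. If some $\lambda_{j_0}=1$, the identity above forces $\operatorname{ord}_{x_0}(f_{j_0})=1$ (so $H_{j_0}:=\{f_{j_0}=0\}$ is smooth at $x_0$), that $j_0$ is the unique index, and $\nu(u_R,x_0)=0$. A pointwise calculation then gives $\nu(u,z)=\mathbf{1}_{H_{j_0}}(z)+\nu(u_R,z)$ near $x_0$, and since $\{\nu(u_R,\cdot)\ge 1\}$ is discrete and does not contain $x_0$, one obtains $(\{\nu(u,\cdot)\ge 1\},x_0)=(H_{j_0},x_0)$, a smooth hypersurface germ --- the desired conclusion. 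Otherwise all $\lambda_j<1$; then no irreducible curve lies generically in $\{\nu(u,\cdot)\ge 1\}$, and combined with the discreteness of the level sets of $u_R$ one gets $(\{\nu(u,\cdot)\ge 1\},x_0)=\{x_0\}$, which is not a smooth hypersurface germ. In this second case the contrapositive requires me to show that $e^{-2u}$ is actually integrable near $x_0$, contradicting the initial assumption.

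For this integrability, consider the blow-up $\pi\colon X\to\mathbb{C}^{2}$ of $x_0$ with exceptional divisor $E\cong\mathbb{P}^{1}$, and set $v:=\pi^*u-\log|J_\pi|$. A change of variables identifies integrability of $e^{-2u}$ near $x_0$ with integrability of $e^{-2v}$ near $E$. Since the generic Lelong number of $\pi^*u$ along $E$ equals $\nu(u,x_0)=1$, $v$ is plurisubharmonic with generic Lelong number zero along $E$, and $\nu(v,p)=\nu(\pi^*u,p)-1$ for each $p\in E$. Applying Theorem~\ref{t:skoda} pointwise and using the compactness of $E$, it is enough to show $\nu(\pi^*u,p)<2$ for every $p\in E$.

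The main obstacle is precisely this uniform bound. If $\nu(\pi^*u,p_0)\ge 2$ for some $p_0\in E$, one is forced to iterate: blow up $p_0$, apply the Siu decomposition of $v$ at $p_0$, and track how the Lelong numbers transform through the resulting chain of centres. The assumption $(\{\nu(u,\cdot)\ge 1\},x_0)=\{x_0\}$ prevents a smooth curve germ through $x_0$ from lying in $\{\nu(u,\cdot)\ge 1\}$; together with a termination argument based on the Puiseux/valuative-tree complexity of the bad direction (in the spirit of Favre--Jonsson), this should force the required bound. The two-dimensional hypothesis is essential, as it ensures the iteration terminates.
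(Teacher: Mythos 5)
Your Siu-decomposition case analysis at the start is sound: writing the hypersurface part through $x_0$ as $\sum_j\lambda_j\log|f_j|$ and using $1=\nu(u,x_0)=\sum_j\lambda_j\operatorname{ord}_{x_0}(f_j)+\nu(u_R,x_0)$ correctly forces the dichotomy ``unique smooth divisor with coefficient $1$'' versus ``all $\lambda_j<1$, so $(\{\nu(u,\cdot)\ge 1\},x_0)\subset\{x_0\}$''. (One small caution: your $u_R$ still carries the hypersurface components of $dd^cu$ not through $x_0$, so $\{\nu(u_R,\cdot)\ge c\}$ need not have codimension $2$ globally; but since those components miss $x_0$, the conclusion about the germ at $x_0$ survives.)

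The genuine gap is the integrability claim in the second case, and you have in effect flagged it yourself. After one blow-up you reduce to showing $\nu(\pi^*u,p)<2$ for all $p\in E$, and there is no reason this should hold: for $u=\tfrac12\log(|z_1|^2+|z_2|^4)$ one has $\nu(u,0)=1$, $\{\nu(u,\cdot)\ge 1\}=\{0\}$, yet $\nu(\pi^*u,p_0)=2$ at the tangent direction $p_0$ of the $z_1$-axis, so Skoda gives nothing at $p_0$ even though $e^{-2u}$ is in fact integrable. So the iteration over an infinite chain of infinitely near points really is needed, and the sentence ``together with a termination argument based on the Puiseux/valuative-tree complexity \dots this should force the required bound'' is precisely the content of Favre--Jonsson's theorem, not a proof of it. As written, the argument is circular in exactly the place where all the difficulty lives.

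The paper takes a genuinely different and much shorter route. It does not try to establish the integrability from scratch by resolution. Instead it uses the strong openness theorem (Proposition~\ref{Pro:GZ1005}, via \cite{GZopen-a,GZopen-b}) to replace $u$ by $\tilde u=\max\bigl(u,\tfrac{l}{l-1}v\bigr)$ with the same multiplier ideal, the same (or smaller) Lelong number at $x_0$, and $\tilde u\in L^\infty_{loc}$ off the analytic set $A=\{z:\mathcal{I}(u)_z\ne\mathcal{O}_z\}$. In dimension $2$, when $\{\nu(u,\cdot)\ge1\}$ is not a smooth curve germ, $A$ is discrete near $x_0$, so $\tilde u$ is locally bounded away from $x_0$; one then simply invokes the Blel--Mimouni criterion (Theorem~\ref{t:BM}) to conclude. (In higher dimension the paper first slices to a generic $2$-plane through $x_0$ using Ohsawa--Takegoshi and Lemma~\ref{l:dimen_fibre_zero}, then applies the same Blel--Mimouni step.) So where you are reconstructing the valuative-tree engine, the paper trades it for two quoted inputs --- strong openness and Blel--Mimouni --- plus a cheap max construction. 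If you want to salvage your blow-up approach, you would need to actually carry out the Favre--Jonsson termination argument (or quote it), which defeats the purpose of a ``blind'' proof of their theorem.
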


We obtain Theorem \ref{t:Lelong_lct_dim2} for any dimension $n$ by proving the following
\begin{Theorem}
\label{t:Lelong_lct}(main theorem)
Let $u$ be a plurisubharmonic function on $\Omega\subset\mathbb{C}^{n}$ satisfying
$\nu(u,x_{0})=1$.
If $(\{z|\nu(u,z)\geq 1\},x_{0})$ is not a germ of regular complex hypersurface,
then $e^{-2u}$ is integrable near $x_{0}\in\Omega$.
\end{Theorem}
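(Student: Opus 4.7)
The plan is to proceed by induction on the complex dimension $n$, with Theorem \ref{t:Lelong_lct_dim2} providing the base case $n=2$. For the inductive step, assume the statement through dimension $n-1$ and argue the contrapositive: suppose $\nu(u,x_{0})=1$ while $e^{-2u}$ fails to be integrable in every neighborhood of $x_{0}$, with the goal of showing that the upper-level set $E_{1}(u):=\{z:\nu(u,z)\geq 1\}$ (analytic by Siu's semicontinuity theorem) is the germ of a regular complex hypersurface at $x_{0}$.

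The first ingredient is a dimension reduction via the Ohsawa--Takegoshi $L^{2}$ extension theorem. For any complex hyperplane $H$ through $x_{0}$ on which $u|_{H}\not\equiv-\infty$, if $e^{-2u|_{H}}$ were integrable on $H$ near $x_{0}$, then extending the constant function $1$ from $H\cap P$ (with $P$ a small polydisk centered at $x_{0}$) would yield $F\in\mathcal{O}(P)$ with $F|_{H\cap P}\equiv 1$ and $\int_{P}|F|^{2}e^{-2u}<\infty$; as $|F|$ is bounded below near $x_{0}$, this forces local integrability of $e^{-2u}$ at $x_{0}$, contradicting the standing assumption. Thus $e^{-2u|_{H}}$ is non-integrable at $x_{0}$ for every admissible hyperplane $H$. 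Kiselman's restriction formula supplies $\nu(u|_{H},x_{0})=1$ for $H$ in a dense family of hyperplanes through $x_{0}$; applied to such a generic $u|_{H}$, the inductive hypothesis gives that $(E_{1}(u|_{H}),x_{0})$ is a regular complex hypersurface of $H$.

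The hard part, and the principal obstacle, is the assembly step: promoting smoothness of generic hyperplane slices $(E_{1}(u|_{H}),x_{0})$ to smoothness of the ambient germ $(E_{1}(u),x_{0})$. A purely set-theoretic upgrade is impossible in general. First, the inclusion $E_{1}(u|_{H})\supset E_{1}(u)\cap H$ runs the wrong way, since Lelong numbers may jump under restriction; second, the cone $\{z_{1}^{2}+z_{2}^{2}+z_{3}^{2}=0\}\subset\mathbb{C}^{3}$ already shows that singular analytic germs can admit smooth generic hyperplane sections. To break this impasse one must exploit the analytic content of $u$ itself rather than the combinatorics of its Lelong level sets alone. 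The natural target is a splitting $u=\log|h|+v$, where $h\in\mathcal{O}_{x_{0}}$ defines a smooth hypersurface germ and $v$ is plurisubharmonic with $\nu(v,x_{0})=0$: Skoda's theorem (Theorem \ref{t:skoda}) then integrates $e^{-2v}$ and yields $\mathcal{I}(u)_{x_{0}}=(h)$, identifying $(E_{1}(u),x_{0})=(\{h=0\},x_{0})$ as a regular hypersurface. Producing $h$ will likely require iterating $L^{2}$ extension with weights adapted to the slice data furnished by the induction, using the smoothness of each slice hypersurface to pin down a coherent defining function across generic $H$.
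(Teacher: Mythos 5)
Your proposal identifies the easy reductions correctly — Ohsawa--Takegoshi forces non-integrability of every slice $e^{-2u|_H}$, and generic slices preserve $\nu(\cdot,x_0)=1$ — but it then stalls at exactly the point you call ``the principal obstacle,'' and that obstacle is genuine. The inductive strategy of going hyperplane by hyperplane from dimension $n$ to $n-1$ produces information about $E_1(u|_H)$, which, as you observe, contains rather than equals $E_1(u)\cap H$; moreover, your cone example shows set-theoretic smoothness of slices cannot recover smoothness of the ambient germ even if the inclusion went the right way. What you sketch at the end — ``producing $h$ will likely require iterating $L^2$ extension with weights adapted to the slice data'' — is a wishlist, not an argument. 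As written, the proof does not close.

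The paper sidesteps the assembly problem entirely by reducing directly to dimension $2$ rather than stepping down one dimension at a time, and the key technical input you are missing is Demailly's strong openness (Proposition \ref{Pro:GZ1005}), which is used to replace $u$ by a $\tilde u$ with the same non-integrability and Lelong number but which is locally bounded off the analytic set $A=\{\mathcal I(u)_z\ne\mathcal O_z\}$. One also needs the elementary but essential observation that once $\nu(u,x_0)=1$, Siu's decomposition rules out a singular hypersurface component of $E_1(u)$ (its multiplicity would push $\nu(u,x_0)\ge2$), so in the case to be excluded $E_1(u)$ has dimension $\le n-2$. A generic $2$-plane $V$ through $x_0$ then meets that set, and hence $A$, only at $x_0$; restricted to $V$, $\tilde u$ is locally bounded away from the origin, has Lelong number $1$, and $e^{-2\tilde u|_V}$ is still non-integrable by Ohsawa--Takegoshi — a direct contradiction with the two-dimensional result of Blel--Mimouni (Theorem \ref{t:BM}), which requires exactly this ``bounded off a point'' hypothesis. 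The alternative proof in Section 4 avoids Blel--Mimouni and uses Favre--Jonsson instead, but still goes straight to a $2$-plane after a Siu decomposition and a H\"older-inequality reduction; neither proof attempts the dimension-by-dimension induction you propose, because that is precisely the route blocked by the issues you yourself flag.
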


The paper is organized as follows.
In the present section, we present our main theorem (Theorem \ref{t:Lelong_lct}),
which is a complete solution {to} Problem \ref{p:skoda}.
In Section 2, we do some preparations in order to prove the main theorem.
In Section 3, we give a proof of the main theorem, and present some reformulations of the theorem.
In Section 4, we present another proof of the main theorem.
In Section 5, we give a new proof of Theorem \ref{t:skoda} (Skoda's result).

\section{Some preparatory results}

In this section,
we recall and present some results which will be used in the proof of Theorem \ref{t:Lelong_lct}.

\subsection{A useful proposition}
$\\$

Inspired by the proof of Demailly's equisingular approximation theorem
(see Theorem 15.3 in \cite{demailly2010}) and using Demailly's strong openness conjecture,
one can obtain the following observation:

\begin{Proposition}
\label{Pro:GZ1005}
Let $D$ be a bounded domain in $\mathbb{C}^{n}$, and $x_{0}\in D$.
Let $u$ be a plurisubharmonic function on $D$.
Then there exists a plurisubharmonic function $\tilde{u}$ on a small enough neighborhood $V_{x_{0}}$ of $x_{0}$ satisfying that

$(1)$ $e^{-2u}-e^{-2\tilde{u}}$ is integrable near $x_{0}$ ($\Rightarrow$ $\mathcal{I}(u)_{x_{0}}=\mathcal{I}(\tilde{u})_{x_{0}}$);

$(2)$  $\nu(\tilde{u},x_{0})\leq\nu(u,x_{0})$;

$(3)$ $\tilde{u}\in L^{\infty}_{loc}(V_{x_{0}}\setminus A)$, where {$A$ is the analytic set} $A:=\{z|\mathcal{I}(u)_{z}\neq\mathcal{O}_{z}\}$.
\end{Proposition}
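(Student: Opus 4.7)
The plan is to adapt the construction in Demailly's proof of the equisingular approximation theorem, now that Demailly's strong openness conjecture is available as the theorem of Guan--Zhou. Choose a small pseudoconvex neighborhood $V_{x_0}$ of $x_0$, and take an orthonormal basis $\{\sigma_k\}_{k\geq 1}$ of the Hilbert space of holomorphic functions on $V_{x_0}$ which are square-integrable against the weight $e^{-2u}$. Set $\tilde u(z):=\tfrac{1}{2}\log\sum_{k\geq 1}|\sigma_k(z)|^2$. This $\tilde u$ is plurisubharmonic, with analytic singularities exactly along the common zero locus of the $\sigma_k$. Since the $\sigma_k$ generate the stalks of the coherent sheaf $\mathcal{I}(u)$ on $V_{x_0}$ by standard $L^2$ theory, this common vanishing locus equals $V(\mathcal{I}(u))=A$; this yields (3). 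For (2), the Lelong number $\nu(\tilde u,x_0)$ is the minimum vanishing order of the $\sigma_k$ at $x_0$, and this minimum is at most $\nu(u,x_0)$ by a Skoda-type estimate on elements of $\mathcal{I}(u)_{x_0}$.

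The main technical point is (1). The Ohsawa--Takegoshi extension theorem applied to $x_0$ (as a zero-dimensional submanifold) gives a pointwise lower bound $\tilde u\geq u-C$ on $V_{x_0}$, hence $e^{-2\tilde u}\leq e^{2C}e^{-2u}$, which in particular yields $\mathcal{I}(u)\subseteq\mathcal{I}(\tilde u)$. For the reverse inclusion, applying strong openness to $\tilde u$ and then to $u$ produces some $\epsilon>0$ with
\[
\mathcal{I}(\tilde u)=\mathcal{I}\bigl((1+\epsilon)\tilde u\bigr)\subseteq \mathcal{I}\bigl((1+\epsilon)(u-C)\bigr)=\mathcal{I}\bigl((1+\epsilon)u\bigr)=\mathcal{I}(u),
\]
so that the two multiplier ideals coincide. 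The integrability of $e^{-2u}-e^{-2\tilde u}$ itself is a strictly stronger property: off $A$, both weights are locally integrable by the very definition of $A$, and near $A$ one has to leverage the $L^2$ structure together with strong openness to show that the singular parts of $e^{-2u}$ and $e^{-2\tilde u}$ cancel to integrable order on a punctured neighborhood of $A\cap V_{x_0}$.

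I expect this last step --- upgrading from equality of multiplier ideals to strict integrability of the weight difference --- to be the main obstacle, since it requires a quantitative form of the strong openness theorem beyond mere ideal-theoretic equality, and shrinking $V_{x_0}$ may have to be combined with a careful choice of the $L^2$ norm used to define $\{\sigma_k\}$.
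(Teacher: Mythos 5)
Your candidate $\tilde u$ does not satisfy condition $(1)$, and the failure is structural rather than technical. Take $u=\log|z_1|$ on the bidisc $\Delta^2$ with $x_0=0$. A holomorphic $f$ on $\Delta^2$ with $\int|f|^2|z_1|^{-2}<\infty$ must vanish on $\{z_1=0\}$, so an orthonormal basis of your weighted Hilbert space is $\{z_1\tau_k\}$ with $\{\tau_k\}$ an orthonormal basis of the unweighted Bergman space $A^2(\Delta^2)$; hence $\tilde u=\log|z_1|+\tfrac12\log K$, where $K$ is the Bergman kernel of $\Delta^2$. Then $e^{-2u}-e^{-2\tilde u}=|z_1|^{-2}\bigl(1-K^{-1}\bigr)$, and since $1-K^{-1}$ is bounded away from zero near the origin, $|e^{-2u}-e^{-2\tilde u}|$ is comparable to $|z_1|^{-2}$ there and is not integrable at $0$, even though $\mathcal{I}(u)_0=\mathcal{I}(\tilde u)_0=(z_1)$. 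This confirms your own suspicion that $(1)$ is strictly stronger than equality of multiplier ideals, but it also shows the plain Bergman-kernel potential cannot be salvaged by shrinking $V_{x_0}$ or sharpening the strong-openness constant: the function itself is the wrong one. (Incidentally, your inclusion $\mathcal{I}((1+\epsilon)\tilde u)\subseteq\mathcal{I}((1+\epsilon)(u-C))$ is reversed: $\tilde u\ge u-C$ gives $\mathcal{I}((1+\epsilon)(u-C))\subseteq\mathcal{I}((1+\epsilon)\tilde u)$, so that chain does not yield the reverse inclusion either.)

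The paper's construction differs at precisely this point by taking a $\max$ with $u$, which is also what Demailly's Theorem 15.3 does. Choose a finite local basis $\{f_j\}_{j=1}^{s}$ of $\mathcal{I}(u)_{x_0}$, use strong openness to find $l>1$ with $\int_{V}|f_j|^2 e^{-2lu}<\infty$, set $v:=\tfrac{1}{2l}\log\sum_j|f_j|^2$, and define $\tilde u:=\max\bigl(u,\tfrac{l}{l-1}v\bigr)$. The $\max$ with $u$ is the decisive device you dropped: it forces $\tilde u\ge u$, so $e^{-2u}-e^{-2\tilde u}\ge0$ and vanishes identically on $\{u\ge\tfrac{l}{l-1}v\}$; on the complementary set $\{u<\tfrac{l}{l-1}v\}$ the pointwise inequality $e^{-2u}=e^{2(l-1)u-2lu}\le e^{2lv-2lu}=\sum_j|f_j|^2 e^{-2lu}$ reduces $(1)$ to exactly the finiteness supplied by strong openness. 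Properties $(2)$ and $(3)$ then fall out immediately: $\tilde u\ge u$ gives $\nu(\tilde u,x_0)\le\nu(u,x_0)$, and $v$ is locally bounded below off the common zero set of the $f_j$, which coincides with $A=V(\mathcal{I}(u))$. Your overall architecture (Demailly-style approximation combined with strong openness) was the right starting point, but the essential structural ingredient is the $\max$ with $u$ together with the precise exponent $\tfrac{l}{l-1}$, not the Bergman kernel.
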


\begin{proof}
Let $\{f_{j}\}_{j=1,2,\cdots,s}$ be a local basis of $\mathcal{I}(u)_{x_{0}}$.
It is clear that there exists a small enough neighborhood $V_{1}\ni x_{0}$,
such that

$(1)$ $\int_{V_{1}}|f_{i}|^{2}e^{-2u}<\infty$ holds for any $i\in\{1,\cdots,s\}$;

$(2)$ $\{f_{j}\}_{j=1,2,\cdots,s}$ generates $\mathcal{I}(u)_{V_{1}}$.

By Demailly's strong openness conjecture which was proved in \cite{GZopen-a,GZopen-b} (see also \cite{GZopen-effect}),
there exists a real number $l>1$,
such that
\begin{equation}
\label{equ:20141005a}
\int_{V_{2}}|f_{i}|^{2}e^{-2lu}<\infty
\end{equation}
holds for any $i\in\{1,\cdots,s\}$,
where $x_{0}\in V_{2}\subset\subset V_{1}$.

Let $v:=\frac{1}{2l}\log(\sum_{i=1}^{s}|f_{i}|^{2})$, and $\tilde{u}:=\max(u,\frac{l}{l-1}v)$.
It is clear that $(2)$ and $(3)$ hold.
Then it suffices to check $(1)$.

Let $I:=\int_{V_{2}}(e^{-2u}-e^{-2\tilde{u}})$.
Then
\begin{equation}
\label{equ:20141005b}
\begin{split}
I
&\leq\int_{\{u<\frac{l}{l-1}v\}\cap V_{2}}e^{-2u}=\int_{\{u<\frac{l}{l-1}v\}\cap V_{2}}e^{2(l-1)u-2lu}
\\&
\leq\int_{\{u<\frac{l}{l-1}v\}\cap V_{2}}e^{2lv-2lu}\leq\int_{V_{2}}e^{2lv-2lu}=\int_{V_{2}}\sum_{i=1}^{s}|f_{i}|^{2}e^{-2lu}<\infty,
\end{split}
\end{equation}
where the last inequality follows from inequality \ref{equ:20141005a}.
Now  $(1)$ has been confirmed.

{The proof of the present proposition is complete.}
\end{proof}

\begin{Remark}
\label{rem:GZ1005}
Assuming that $\nu(u.x_{0})=1$, and $e^{-2u}$ is not integrable near $x_{0}$,
then $(1)$ in Proposition \ref{Pro:GZ1005} shows that $\nu(\tilde{u},x_{0})\geq1$.
By $(2)$ in Proposition \ref{Pro:GZ1005}, it follows that $\nu(\tilde{u}.x_{0})=1.$

Then we obtain $\tilde{u}$ satisfying that

$(1)$ $e^{-2u}-e^{-2\tilde{u}}$ is integrable near $x_{0}$ ($\Rightarrow$ $e^{-2\tilde{u}}$ is not integrable near $x_{0}$);

$(2)$  $\nu(\tilde{u},x_{0})=\nu(u,x_{0})$;

$(3)$ $\tilde{u}\in L^{\infty}_{loc}(V_{x_{0}}\setminus A)$, where {$A$ is the} analytic set $A:=\{z|\mathcal{I}(u)_{z}\neq\mathcal{O}_{z}\}$.
\end{Remark}

\subsection{Ohsawa-Takegoshi $L^2$ extension theorem}
$\\$

{We recall the statement of the Ohsawa-Takegoshi $L^2$ extension theorem.
It will allow us to argue by induction on dimension:}

\begin{Theorem}
\label{t:ot_plane}\cite{ohsawa-takegoshi}
Let $D$ be a bounded pseudo-convex domain in $\mathbb{C}^{n}$.
Let $u$ be a plurisubharmonic function on $D$.
Let $H$ be an $m$-dimensional complex plane in $\mathbb{C}^{n}$.
Then for any holomorphic function on $H\cap D$ satisying
$$\int_{H\cap D}|f|^{2}e^{-2u}d\lambda_{H}<+\infty,$$
there exists a holomorphic function $F$ on $D$ satisfying $F|_{H\cap D}=f$,
and
$$\int_{D}|F|^{2}e^{-2u}d\lambda_{n}\leq C_{D}\int_{H\cap D}|f|^{2}e^{-2u}d\lambda_{H},$$
where $C_{D}$ only depends on the diameter of $D$ and $m$,
and $d\lambda_{H}$ is the Lebesgue measure.
\end{Theorem}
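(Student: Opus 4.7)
The plan is to prove the extension theorem by a $\bar\partial$-method combined with a plurisubharmonic weight that carries a logarithmic singularity along $H$. After an affine unitary change of coordinates I may assume $H=\{z_{m+1}=\cdots=z_n=0\}$ and write $z=(z',z'')$ with $z'\in\mathbb{C}^m$, $z''\in\mathbb{C}^{n-m}$. The rough idea is to first produce a smooth extension of $f$ to $D$, and then correct it to a holomorphic one by solving a suitable $\bar\partial$-equation with $L^2$-estimates.

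Fix a cutoff $\chi\in C^{\infty}_{c}(\mathbb{R})$ with $\chi\equiv 1$ near $0$ and set $\tilde f(z',z''):=\chi(|z''|^2/\epsilon^2)f(z')$, which equals $f$ on $H\cap D$ and is smooth on $D$ but not holomorphic; its $\bar\partial$ is supported in an annular region where $\chi$ transitions. I would then look for $v$ on $D$ solving $\bar\partial v=\bar\partial\tilde f$ whose restriction to $H$ vanishes, so that $F:=\tilde f-v$ is the desired holomorphic extension. To force vanishing on $H$, I would apply H\"ormander's $L^{2}$-existence theorem for $\bar\partial$ with the plurisubharmonic weight $\varphi:=u+(n-m)\log|z''|$ (approximated by $u+(n-m)\log(|z''|^2+\epsilon^2)$ to stay smooth). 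The singular term is plurisubharmonic on $D$, and any $L^{2}$-solution relative to such a weight must vanish to the correct order along $H$, which yields $v|_{H\cap D}=0$ automatically.

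The main obstacle is quantitative: obtaining a constant $C_D$ depending only on $\operatorname{diam}(D)$ and $m$, independent of $u$. A straightforward application of H\"ormander's inequality produces a constant that deteriorates as $\epsilon\to 0$, because the singular weight has no lower bound on the Levi form in the normal directions to $H$. The technical heart of the Ohsawa--Takegoshi argument is therefore a \emph{twisted} Bochner--Kodaira--Nakano identity: one multiplies the natural weight by an auxiliary positive factor $\eta$ (together with an absorbing multiplier $\lambda$) chosen so that the harmful cross terms $i\partial\bar\partial\eta$ and $\lambda^{-1}\partial\eta\wedge\bar\partial\eta$ exactly cancel the contribution of the singular direction, leaving a clean positivity estimate. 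Solving the twisted $\bar\partial$-equation with weight $\varphi_\epsilon$, using the resulting a~priori bound, and then passing to the limit $\epsilon\to 0$ via Fatou's lemma on the family $\{v_\epsilon\}$, yields the holomorphic extension $F$ together with the claimed $L^{2}$ inequality.
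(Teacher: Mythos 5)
The paper offers no proof of this statement: it is quoted verbatim from the cited work of Ohsawa and Takegoshi, so there is nothing internal to compare against. Judged on its own, your outline follows exactly the standard route to the theorem: reduction to $H=\{z''=0\}$, a cutoff extension $\tilde f$, the singular weight $u+(n-m)\log|z''|$ whose non-integrable factor $|z''|^{-2(n-m)}$ forces the correction term to vanish on $H$, and the twisted Bochner--Kodaira--Nakano identity to obtain a constant independent of $u$ and of the regularization parameter. You have also correctly located where a naive H\"ormander estimate breaks down. Two routine points you omit but would need: regularization of the merely plurisubharmonic $u$ and exhaustion of the bounded pseudoconvex $D$ by smooth strictly pseudoconvex subdomains before the $\bar\partial$-machinery applies, followed by a normal-families limit.

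That said, as a proof this is incomplete precisely at the step you yourself identify as ``the technical heart.'' The entire content of the theorem is the existence of admissible twisting functions $\eta$ and $\lambda$ (in the classical argument, functions of $\log(|z''|^{2}+\epsilon^{2})$) for which the error terms $i\partial\bar\partial\eta$ and $\lambda^{-1}\,\partial\eta\wedge\bar\partial\eta$ are absorbed while the surviving positivity controls the $\bar\partial\tilde f$ datum \emph{uniformly in $\epsilon$}, with a bound depending only on $\sup_{D}|z''|$ (hence on the diameter) and $m$. Asserting that such $\eta,\lambda$ ``can be chosen'' without exhibiting them and verifying the resulting a priori inequality is exactly the gap between a roadmap and a proof; every known proof of the theorem lives in that verification. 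So your proposal is a correct and well-informed plan, but it does not yet constitute a proof of the stated estimate.
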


In particular, there are two consequences of Theorem \ref{t:ot_plane} which will be used:

\begin{Remark}
\label{r:point}
Let $H$ be a point $z_{3}$ and $f=1$.
If $u(z_{3})<+\infty$,
then there exists a holomorphic function $F$ on $D$ satisfying $F|_{z_{3}}=1$,
and
$$\int_{D}|F|^{2}e^{-2u}d\lambda_{n}\leq C_{D} e^{-2u(z_{3})},$$
where $C_{D}$ only depends on the diameter of $D$.
\end{Remark}

Let $f\equiv 1$.
By Theorem \ref{t:ot_plane} and contradiction,
it follows that

\begin{Remark}
\label{r:integ_line}
Assume that $e^{-2u}$ is not integrable near $x_{0}$.
Then for any complex plane $H$ through $x_{0}$, $e^{-2u|_{H}}$ is not integrable near $x_{0}$.
\end{Remark}

\subsection{Lelong number along {a} complex line}
$\\$

We recall that
\begin{Lemma}
\label{l:lelong_fibre}(\cite{siu74}, see also \cite{demailly-book})
Let $u$ be a negative plurisubharmonic function on the unit ball $\mathbb{B}^{n}(x_{0},1)\subset\mathbb{C}^{n}$
satisfying $\nu(u,x_{0})=1$.
Then we have
$$\liminf_{r\to0}\frac{u(rz_{2}+x_{0})}{\log r}=1,$$
for almost all $z_{2}$ in the sense of the Lebesgue measure on sphere $S(x_{0},1)(=\partial\mathbb{B}^{n}(x_{0},1))$.
\end{Lemma}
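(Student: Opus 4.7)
My approach is to integrate the desired pointwise identity $u(rz)/\log r \to 1$ over the unit sphere and invoke Fatou's lemma, exploiting the negativity of $u$ to make the integrand nonnegative.

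After translating so that $x_0 = 0$, define $\ell(z) := \liminf_{r \to 0^+} u(rz)/\log r$ for $z \in S(0,1)$. The first step is to show $\ell(z) \geq 1$ for every $z$. For this I would invoke the classical reformulation of the Lelong number in terms of the maximum on spheres: $\nu(u,0) = 1$ is equivalent to $\sup_{|w|=r} u(w) = \log r + O(1)$ as $r \to 0^+$. Applying this at $w = rz$ and dividing by the negative quantity $\log r$ gives $u(rz)/\log r \geq 1 + o(1)$, whence $\ell(z) \geq 1$ pointwise on $S(0,1)$.

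The second step is to establish the averaged upper bound $\int_{S(0,1)} \ell(z)\,d\sigma(z) \leq \sigma(S(0,1))$. Since $u \leq 0$ and $\log r < 0$ for $0 < r < 1$, the family $z \mapsto u(rz)/\log r$ is nonnegative on $S(0,1)$, so Fatou's lemma (applied along any sequence $r_n \to 0^+$) yields
$$\int_{S(0,1)} \ell(z)\,d\sigma(z) \leq \liminf_{r \to 0^+} \int_{S(0,1)} \frac{u(rz)}{\log r}\,d\sigma(z) = \sigma(S(0,1)) \cdot \lim_{r \to 0^+} \frac{\bar{u}(r)}{\log r},$$
where $\bar u(r)$ denotes the normalized mean of $u$ over $S(0,r)$. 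The last limit equals $\nu(u,0) = 1$ by the standard identification of the Lelong number as the asymptotic slope of the spherical means; this limit genuinely exists because $\bar u$ is convex in $\log r$.

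Combining the two steps, $\int_{S(0,1)} (\ell(z) - 1)\,d\sigma(z) \leq 0$ together with $\ell(z) \geq 1$ forces $\ell(z) = 1$ for $\sigma$-almost every $z$, which is the claim. No step presents a genuine obstacle; the only non-routine ingredient is the well-known equivalence of the various formulations of the Lelong number (via $\liminf_{z \to 0} u(z)/\log|z|$, via the spherical maximum, and via the spherical mean), which can simply be cited from Demailly's book or Siu's original paper.
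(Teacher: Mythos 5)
Your argument is correct and follows essentially the same route as the paper: a pointwise lower bound $\ell(z)\geq 1$ for every $z$ obtained from a standard characterization of the Lelong number, followed by Fatou's lemma applied to the nonnegative functions $z\mapsto u(rz)/\log r$ together with the spherical-mean characterization $\lim_{r\to 0}\bar{u}(r)/\log r=\nu(u,x_{0})=1$. (One minor imprecision: $\nu(u,0)=1$ yields only the one-sided bound $\sup_{|w|=r}u(w)\leq\log r+O(1)$ --- the reverse inequality can fail, e.g.\ when the spherical maximum behaves like $\log r-\sqrt{-\log r}$ --- but this is exactly the direction your argument uses, so the proof stands.)
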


It is known that (converse proposition of Lemma \ref{l:lelong_fibre}) if $\liminf_{r\to0}\frac{u(rz_{2}+x_{0})}{\log r}=1$ holds
for almost all $z_{2}$ in the sense of the Lebesgue measure on $S(x_{0},1)$,
then $\nu(u,x_{0})=1$ (\cite{siu74}, see also \cite{demailly-book}).

\begin{proof}(proof of Lemma \ref{l:lelong_fibre})
By {standard equivalent definitions of Lelong numbers} (see \cite{demailly2010}),
it follows that
\begin{equation}
\label{equ:20140603a}
\liminf_{r\to0}\frac{u(rz_{2}+x_{0})}{\log r}\geq\liminf_{z\to0}\frac{u(z+x_{0})}{\log |z|}=\nu(u,x_{0})=1.
\end{equation}
Recall a well-known equivalent form of the Lelong number of $u$ at $x_{0}$
(which can be chosen as definition of the Lelong number of $u$ at $x_{0}$):
$$\nu(u,x_{0})=\lim_{r\to0}\frac{\mu_{r}(u)}{\log r}$$
where $\mu_{r}(u)=\frac{\int_{S(x_{0},r)}udS}{Vol(S(x_{0},r))}$ (see \cite{demailly-book,demailly2010}).
Using $\nu(u,x_{0})=1$,
one can obtain
$$\lim_{r\to0}\frac{1}{Vol(S(x_{0},1))}\int_{S(x_{0},1)}\frac{u(rz_{2}+x_{0})}{\log r}dS(z_{2})=\lim_{r\to0}\frac{\mu_{r}(u)}{\log r}=1,$$
which implies
$$\frac{1}{Vol(S(x_{0},1))}\int_{S(x_{0},1)}\liminf_{r\to0}\frac{u(rz_{2}+x_{0})}{\log r}dS(z_{2})\leq 1,$$
(note that $\frac{u(rz_{2}+x_{0})}{\log r}>0$).
By equality \ref{equ:20140603a},
it follows that
$$\liminf_{r\to0}\frac{u(rz_{2}+x_{0})}{\log r}=1$$
for almost all $z_{2}\in S(x_{0},1)$ in the sense of the Lebesgue measure on $S(x_{0},1)$.
Then Lemma \ref{l:lelong_fibre} has been proved.
\end{proof}

Lemma \ref{l:lelong_fibre} is equivalent to the following
\begin{Remark}
\label{cor:line_Lelong}(\cite{siu74}, see also \cite{demailly-book})
Let $u$ be a plurisubharmonic function on the unit ball $\mathbb{B}^{n}(x_{0},1)\subset\mathbb{C}^{n}$.
Then $\nu(u,x_{0})=c$ if and only if
\begin{equation}
\label{equ:20140603e}
\nu(u|_{L},x_{0})=c,
\end{equation}
for almost all complex line through $x_{0}$ in the sense of the Lebesgue measure on $\mathbb{C}\mathbb{P}^{n-1}$.
\end{Remark}

{In fact it has been proved by Ben Messaoud and El Mir (see \cite{BenMir00})
that the set of $z_{2}$ for which the $p$ dimensional slice through $x_{0}$ has Lelong number larger than $\nu(u,x_{0})$ is pluripolar
in the corresponding Grassmannian manifold;
we will not need this result here. }

\subsection{Dimension of {varieties} along the fibres}\label{subsec:variety_fibre}
$\\$

Let $X:=\{z_{1}=\cdots=z_{n-1}=0\}$.
Consider a map $p$ from $\mathbb{C}^{n}\setminus X$ to $\mathbb{C}\mathbb{P}^{n-2}$:
$$p(z_{1},\cdots,z_{n})=(z_{1}:\cdots:z_{n-1}).$$

\begin{Lemma}
\label{l:dimen_fibre_zero}
Let $H$ be an analytic subvariety on $\mathbb{B}^{n}$ with dimension $n-k$,
such that $H\cap X=(0,\cdots,0)$.
Then the dimension of $H\cap p^{-1}(z_{1}:\cdots:z_{n-1})$ is $2-k$
for almost all $(z_{1}:\cdots:z_{n-1})$ in the sense of the Lebesgue measure on $\mathbb{C}\mathbb{P}^{n-2}$.
\end{Lemma}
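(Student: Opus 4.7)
The plan is to study the restriction of $p$ to $H$ via the standard fiber dimension theorem for holomorphic maps between analytic varieties. Decompose $H$ into its irreducible components $H_1,\ldots,H_m$, each of pure dimension $n-k$. Since $H\cap X=\{0\}$, each $H_i\setminus\{0\}$ lies entirely in the domain of $p$, yielding a holomorphic map $p_i:=p|_{H_i\setminus\{0\}}\colon H_i\setminus\{0\}\to\mathbb{C}\mathbb{P}^{n-2}$. Let $d_i$ denote its generic rank; then $d_i\leq\min(n-k,n-2)$.

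I distinguish two cases for each component. If $d_i<n-2$, then the image of $p_i$ has locally finite Hausdorff $2d_i$-measure and is therefore contained in a subset of $\mathbb{C}\mathbb{P}^{n-2}$ of Lebesgue measure zero (since the ambient real dimension $2(n-2)$ strictly exceeds $2d_i$); consequently $H_i\cap p^{-1}(a)=\emptyset$ for almost all $a$. If instead $d_i=n-2$, which forces $k\leq 2$, then by the fiber dimension theorem there is a proper analytic subset $E_i\subset\mathbb{C}\mathbb{P}^{n-2}$ outside of which the fiber $p_i^{-1}(a)$ has pure dimension $\dim H_i-d_i=(n-k)-(n-2)=2-k$.

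Combining these two cases across all components and discarding the measure-zero exceptional loci, I obtain that for almost all $a\in\mathbb{C}\mathbb{P}^{n-2}$ the intersection $H\cap p^{-1}(a)=\bigcup_i\bigl(H_i\cap p^{-1}(a)\bigr)$ has the expected dimension $2-k$, with the convention that an empty fiber corresponds to the case $2-k<0$ (i.e.\ $k>2$, when every component lies in the first case).

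The main obstacle is the rigorous application of the fiber dimension theorem: $p_i$ is not proper, so the locus in $\mathbb{C}\mathbb{P}^{n-2}$ where fibers jump in dimension is not a priori analytic, and Remmert's proper mapping theorem does not apply directly. I would circumvent this by passing to the blow-up $\mathrm{Bl}_X(\mathbb{B}^n)$ of $\mathbb{B}^n$ along $X$, on which $p$ extends to a proper holomorphic map $\tilde p\colon \mathrm{Bl}_X(\mathbb{B}^n)\to\mathbb{C}\mathbb{P}^{n-2}$ with $2$-dimensional fibers. The proper transform of $H_i$ then projects via $\tilde p$ with the same generic fiber structure as $p_i$, and Remmert's theorem yields the required analytic exceptional subsets of $\mathbb{C}\mathbb{P}^{n-2}$, completing the measure-theoretic conclusion.
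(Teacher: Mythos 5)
Your Case 1 (generic rank $d_i<n-2$) is fine: the image of $H_i\setminus\{0\}$ then has locally finite $2d_i$-dimensional Hausdorff measure, hence Lebesgue measure zero in $\mathbb{C}\mathbb{P}^{n-2}$, so almost every fiber misses $H_i$. The gap is in Case 2, precisely in the step you flag as the main obstacle. Blowing up $\mathbb{B}^{n}$ along $X$ does resolve the indeterminacy of $p$, but it does \emph{not} make the map proper: the fiber of $\tilde p$ over a point $a\in\mathbb{C}\mathbb{P}^{n-2}$ is the open $2$-ball $(p^{-1}(a)\cup X)\cap\mathbb{B}^{n}$, which is not compact, and the restriction of $\tilde p$ to the proper transform of $H_i$ is not proper either, since a positive-dimensional closed analytic subset of the ball necessarily accumulates on $\partial\mathbb{B}^{n}$. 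So Remmert's proper mapping theorem still does not apply, the analytic exceptional sets $E_i$ are not produced, and Case 2 remains unproved as written.

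The repair is within reach using only the tool you already deploy in Case 1: you do not need the exceptional locus to be analytic in the base, only to have measure zero, so argue on the source. By the Cartan--Remmert upper semicontinuity of fiber dimension, $\Sigma_i:=\{x\in H_i\setminus\{0\}\,:\,\dim_x p_i^{-1}(p_i(x))\geq 3-k\}$ is an analytic subset of $H_i\setminus\{0\}$; on each of its components the fibers of $p_i$ have dimension at least $3-k$, so the generic rank of $p_i|_{\Sigma_i}$ is at most $(n-k)-(3-k)=n-3<n-2$, and therefore $p_i(\Sigma_i)$ has Lebesgue measure zero by your Case 1 argument. Hence for almost all $a$ the fiber $H\cap p^{-1}(a)$ has dimension at most $2-k$, which is what the application requires. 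For comparison, the paper avoids fiber-dimension theory altogether: it observes that if the fibers over a positive-measure set of $a$ had dimension at least $3-k$, then $H$ would have nonzero Hausdorff measure in dimension $(2n-4)+(6-2k)=2n-2k+2$, contradicting $\dim_{\mathbb{C}}H=n-k$. That one-line counting argument is more elementary than either version of your proof.
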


\begin{proof}
We prove the present Lemma by contradiction:
if there exists a positive measure set $A\subset\mathbb{C}\mathbb{P}^{n-2}$,
such that $p^{-1}(z_{1}:\cdots:z_{n-1})\cap H$ is dimension $\geq 3-k$,
then the $2n-2k+2(=(2n-4)+(6-2k))$ Hausdorff measure of $H$ is not zero,
which contradicts to the complex dimension of $H$ is $n-k$.
\end{proof}

{It can be observed that the negligible set involved in the "almost all" conclusion is in
fact an algebraic subvariety directly related to the tangent cone of $H$ (which also has dimension $n-k$).}

\section{Proof of Theorem \ref{t:Lelong_lct}}

In the present section, we give a proof of Theorem \ref{t:Lelong_lct} and present some reformulations of the Theorem.

\subsection{Proof of Theorem \ref{t:Lelong_lct}}
$\\$

Without {loss} of generality, we assume that $x_{0}=\textbf{0}=(0,\cdots,0)\in\mathbb{C}^{n}$ and $u$ is negative.

We prove Theorem \ref{t:Lelong_lct} by contradiction and Theorem \ref{t:BM}:
if not
(if $(\{z|\nu(u,z)\geq 1\},x_{0})$ is dimension $(n-1)$ but not regular complex at $x_{0}$,
using Siu's decomposition theorem \cite{siu74}, one can obtain that $\nu(u,x_{0})\geq 2$
which contradicts to $\nu(u,x_{0})=1$),
then there exists a plurisubharmonic function $u$ on $\mathbb{B}^{n}(\textbf{0},1)$ satisfying that:

$(1)$ $e^{-2u}$ is not integrable near \textbf{0};

$(2)$ $\nu(u,\textbf{0})=1$;

$(3)$ there exists an analytic subvariety $H$ with complex dimension $n-2$ near \textbf{0},
such that $H\supset\{z|\nu(u,z)\geq 1\}$.

By Remark \ref{rem:GZ1005}, there exists a plurisubharmonic function $\tilde{u}$ on a neighborhood $V_{3}$ of \textbf{0}
such that

$(1)$ $e^{-2\tilde{u}}$ is not integrable near \textbf{0};

$(2)$ $\nu(\tilde{u},\textbf{0})=1$;

$(3)$  $\tilde{u}\in L^{\infty}_{loc}(V_{3}\setminus H)$.

Using Lemma \ref{cor:line_Lelong},
one can choose $X$ (as in subsection \ref{subsec:variety_fibre}) satisfying $\nu(\tilde{u}|_{X},\textbf{0})=1$.
By Lemma \ref{l:dimen_fibre_zero} $(k=2)$,
it follows that there exists $(z_{1}:\cdots:z_{n-1})$,
such that germ
$((p^{-1}(z_{1}:\cdots:z_{n-1})\cup X)\cap H,\textbf{0})=(\textbf{0},\textbf{0})$,
i.e. $(p^{-1}(z_{1}:\cdots:z_{n-1})\cup X)\cap H$ is isolated on the complex linear surfaces $p^{-1}(z_{1}:\cdots:z_{n-1})\cup X$.
Then it is clear that
\begin{equation}
\label{equ:20141005d}
\tilde{u}|_{p^{-1}(z_{1}:\cdots:z_{n-1})\cup X}\in L^{\infty}_{loc}((p^{-1}(z_{1}:\cdots:z_{n-1})\cup X)\cap V_{4}\setminus\textbf{0}),
\end{equation}
where $V_{4}$ is a small enough neighborhood of \textbf{0}.

Note that
$$1=\nu(\tilde{u},\textbf{0})\leq\nu(\tilde{u}|_{p^{-1}(z_{1}:\cdots:z_{n-1})\cup X},\textbf{0})\leq\nu(\tilde{u}|_{X},\textbf{0})=1,$$
then it is clear that
\begin{equation}
\label{equ:20141005c}
\nu(\tilde{u}|_{p^{-1}(z_{1}:\cdots:z_{n-1})\cup X},\textbf{0})=1.
\end{equation}

Using Remark \ref{r:integ_line} and $e^{-2\tilde{u}}$ is not integrable near \textbf{0},
one can obtain that
$$e^{-2\tilde{u}|_{p^{-1}(z_{1}:\cdots:z_{n-1})\cup X}}$$
is not integrable near \textbf{0}.
Combining equalities \ref{equ:20140603e} and \ref{equ:20141005c},
one can obtain that $\tilde{u}|_{p^{-1}(z_{1}:\cdots:z_{n-1})\cup X}$ satisfying:

$(1)$ $e^{-2\tilde{u}|_{p^{-1}(z_{1}:\cdots:z_{n-1})\cup X}}$ is not integrable near \textbf{0};

$(2)$ $\nu(\tilde{u}|_{p^{-1}(z_{1}:\cdots:z_{n-1})\cup X},\textbf{0})=1$;

$(3)$ $\tilde{u}|_{p^{-1}(z_{1}:\cdots:z_{n-1})\cup X}\in L^{\infty}_{loc}((p^{-1}(z_{1}:\cdots:z_{n-1})\cup X)\cap V_{4}\setminus\textbf{0})$.

Note that the existence of $\tilde{u}|_{p^{-1}(z_{1}:\cdots:z_{n-1})\cup X}$ contradicts Theorem \ref{t:BM} (
letting $\Omega=(p^{-1}(z_{1}:\cdots:z_{n-1})\cup X)\cap V_{4}$).
{Theorem \ref{t:Lelong_lct} is proved.}

\subsection{Reformulations of Theorem \ref{t:Lelong_lct}}
$\\$

By Siu's decomposition theorem \cite{siu74},
it follows that if $(\{z|\nu(u,z)\geq 1\},x_{0})$ is a germ of regular complex hypersurface,
then $e^{-2u}$ is not integrable near $x_{0}\in\Omega$.
Theorem \ref{t:Lelong_lct} can be reformulated:

\emph{Let $u$ be a plurisubharmonic function on $\Omega\subset\mathbb{C}^{n}$ satisfying
$\nu(u,x_{0})=1$.
Then $e^{-2u}$ is not integrable near $x_{0}\in\Omega$ if and only if
$(\{z|\nu(u,z)\geq 1\},x_{0})$ is a germ of regular complex hypersurface.}

There is an equivalent characterization of "$(\{z|\nu(u,z)\geq 1\},x_{0})$ is a germ of regular complex hypersurface":

The positive closed current $dd^{c}u$ near $x_{0}$
is the sum of the current of integration on a regular complex hypersurface ($\{z|\nu(u,z)\geq 1\}$)
and a current with zero Lelong number at $x_{0}$ (by Siu's decomposition theorem \cite{siu74}).

Then there is another reformulation of Theorem \ref{t:Lelong_lct}:

\emph{Let $u$ be a plurisubharmonic function on $\Omega\subset\mathbb{C}^{n}$ satisfying
$\nu(u,x_{0})=1$.
Then $e^{-2u}$ is not integrable near $x_{0}\in\Omega$ if and only if
$dd^{c}u$ near $x_{0}$
is the sum of the current of integration on a regular complex hypersurface through $x_{0}$
and a current with zero Lelong number at $x_{0}$.}

When $n=2$, the above statement {has been obtained in} \cite{FM05j}.

\section{Another proof of Theorem \ref{t:Lelong_lct}}

In the present section,
we present another proof of Theorem \ref{t:Lelong_lct} by using
Theorem \ref{t:Lelong_lct_dim2} instead of using Theorem \ref{t:BM}:

\subsection{H\"{o}lder inequality}
$\\$

Using {the} H\"{o}lder inequality and the openness conjecture, one can obtain the following Lemma:
\begin{Lemma}
\label{l:Holder}
Let $u=a_{1}u_{1}+u_{2}$ $(a_{1}>0)$ be {plurisubharmominc} on $\mathbb{B}^{n}(x_{0},1)$ satisfying that

$(1)$ $e^{-2u_{1}}$ is integrable near $x_{0}$;

$(2)$ $\nu(u_{2},x_{0})=a_{2}$;

$(3)$ $a_{1}+a_{2}=1$.

Then we have $e^{-2u}$ is  integrable near $x_{0}$.
\end{Lemma}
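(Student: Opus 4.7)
The plan is a direct application of Hölder's inequality, where the two exponents are balanced against, respectively, the strong openness of $u_1$ and Skoda's integrability criterion (Theorem~\ref{t:skoda}) applied to a scaled $u_2$.

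First I would invoke the strong openness conjecture (used already in Proposition~\ref{Pro:GZ1005}) to upgrade hypothesis~$(1)$: there exists a real number $l>1$ such that $e^{-2lu_1}$ is integrable on some neighborhood $V$ of $x_0$. Next I would split
\[
e^{-2u}=e^{-2a_1u_1}\cdot e^{-2u_2}
\]
and apply Hölder with the conjugate pair $p:=l/a_1$ and $q:=l/(l-a_1)$ (one checks $1/p+1/q=1$), obtaining
\[
\int_{V}e^{-2u}\le\Bigl(\int_{V}e^{-2lu_1}\Bigr)^{1/p}\Bigl(\int_{V}e^{-2qu_2}\Bigr)^{1/q}.
\]
The first factor is finite by the choice of $l$. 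For the second factor I would compute the Lelong number of $q u_2$ at $x_0$:
\[
\nu(qu_2,x_0)=q\,a_2=\frac{l(1-a_1)}{l-a_1},
\]
which is strictly less than $1$ precisely because $l>1$ (the inequality $l(1-a_1)<l-a_1$ reduces to $l>1$). Skoda's theorem (Theorem~\ref{t:skoda}) then gives integrability of $e^{-2qu_2}$ near $x_0$, and combining the two estimates yields integrability of $e^{-2u}$.

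The only real choice in the argument is the pairing of exponents, and it is forced by the two constraints ``$p\,a_1\le l$'' (openness) and ``$q\,a_2<1$'' (Skoda); the identity $a_1+a_2=1$ is exactly what makes these two constraints compatible for some conjugate pair whenever $l>1$. Thus there is no genuine obstacle: the main point is to notice that hypothesis~$(3)$ is precisely the borderline relation that permits a Hölder split using the gain in integrability provided by strong openness on the $u_1$ factor and Skoda's theorem on the $u_2$ factor.
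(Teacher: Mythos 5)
Your proposal is correct and is essentially identical to the paper's proof: the paper also applies the openness conjecture (Demailly--Koll\'ar, proved by Berndtsson) to get $e^{-2cu_1}$ integrable for some $c>1$, then uses H\"older with exactly the conjugate exponents $p=c/a_1$, $q=c/(c-a_1)$ and concludes via Skoda's theorem since $\nu(qu_2,x_0)=c(1-a_1)/(c-a_1)<1$. The only cosmetic difference is that you invoke the strong openness theorem where the plain openness statement suffices.
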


\begin{proof}
By the openness conjecture posed by Demailly-Kollar \cite{D-K01} which was recently proved by Berndtsson \cite{berndtsson13},
it follows that there exists $c>1$ such that $e^{-2cu_{1}}$ is integrable near $x_{0}$.
Using H\"{o}lder inequality, we obtain that
\begin{equation}
\label{equ:20140715a}
\int_{U}e^{-2u}\leq(\int_{U}e^{-2cu_{1}})^{\frac{a_{1}}{c}}(\int_{U}e^{-2\frac{1}{1-\frac{a_{1}}{c}}u_{2}})^{1-\frac{a_{1}}{c}}.
\end{equation}

By Theorem \ref{t:skoda}, it follows that
$\int_{U}e^{-2\frac{1}{1-\frac{a_{1}}{c}}u_{2}}$ is integrable for $U$ small enough.
Using inequality \ref{equ:20140715a},
one {obtains} the present Lemma.
\end{proof}

\subsection{Potentials of positive closed currents}
$\\$

In this subsection,
we recall some well-known results (see \cite{demailly-book}).
\begin{Lemma}
\label{l:composition}
Let $T$ be a positive closed current satisfying $T=T_{1}+T_{2}$,
where $T_1$ and $T_2$ are positive closed current on $\mathbb{B}^{n}$.
Then there are three plurisubharmonic functions $u$, $u_{1}$ and $u_{2}$,
satisfying
$T=dd^{c}u$, $T_{1}=dd^{c}u_{1}$ and $T_{2}=dd^{c}u_{2}$,
such that $u=u_{1}+u_{2}+v$ almost everywhere with respect to the Lebesgue measure on $\mathbb{B}^{n}$,
where $v$ is a pluriharmonic function on $\mathbb{B}^{n}$.
Moveover,
$u=u_{1}+u_{2}+v$ everywhere on $\mathbb{B}^{n}$.
\end{Lemma}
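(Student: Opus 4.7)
The plan is to use the classical fact that on the pseudoconvex ball $\mathbb{B}^{n}$, every positive closed $(1,1)$-current admits a global plurisubharmonic potential, together with the observation that the distributional difference between two potentials of the same current is pluriharmonic.

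First, I would invoke the standard existence theorem: on $\mathbb{B}^{n}$, any positive closed $(1,1)$-current $S$ can be written $S=dd^{c}\varphi$ for some plurisubharmonic $\varphi$ on $\mathbb{B}^{n}$. This can be obtained by convolution against a fundamental solution of $dd^{c}$, by the solution of $\bar{\partial}$ on the Stein ball, or more elementarily by gluing local potentials (which exist by the $dd^{c}$-Poincar\'e lemma) via a partition of unity and then subtracting a smooth correction. Applying this to $T$, $T_{1}$, $T_{2}$ produces plurisubharmonic functions $u,u_{1},u_{2}$ on $\mathbb{B}^{n}$ with $T=dd^{c}u$ and $T_{i}=dd^{c}u_{i}$ for $i=1,2$.

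Second, set $w:=u-u_{1}-u_{2}$. Each summand lies in $L^{1}_{\mathrm{loc}}(\mathbb{B}^{n})$, so $w$ defines a distribution on $\mathbb{B}^{n}$. The hypothesis $T=T_{1}+T_{2}$ forces $dd^{c}w=0$ in the distributional sense. In particular $\Delta w=0$ as a distribution, so by Weyl's elliptic regularity $w$ agrees almost everywhere with a smooth function $v$; the vanishing $dd^{c}v=0$ then holds in the classical sense, which means $v$ is pluriharmonic on $\mathbb{B}^{n}$. This already yields $u=u_{1}+u_{2}+v$ almost everywhere, which is the first half of the conclusion.

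Third, to upgrade the a.e. identity to a pointwise one, observe that both sides are genuine plurisubharmonic functions (the right-hand side is a sum of psh and smooth pluriharmonic terms). Two psh functions that coincide almost everywhere must coincide everywhere, because for any psh $\varphi$ one has $\varphi(z)=\lim_{r\to 0^{+}}|B(z,r)|^{-1}\int_{B(z,r)}\varphi\,d\lambda$: the right-hand side depends only on the $L^{1}_{\mathrm{loc}}$-equivalence class of $\varphi$, is decreasing in $r$ by the sub-mean-value property, and tends to $\varphi(z)$ by upper semicontinuity. Applied to both sides of the identity, this promotes the a.e. equality to the pointwise one. The main obstacle is conceptually minor and lies entirely in carefully separating the distributional level (where $w$ is only defined up to null sets) from the pointwise level (where the psh representatives of $u,u_{1},u_{2}$ are uniquely determined by upper semicontinuity); once this is kept straight, the pluriharmonic difference $v$ is automatically smooth and the identity $u=u_{1}+u_{2}+v$ holds everywhere on $\mathbb{B}^{n}$.
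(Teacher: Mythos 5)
Your argument matches the paper's proof: both produce global potentials on the pseudoconvex ball, deduce $dd^{c}(u-u_{1}-u_{2})=0$ hence a.e.\ equality with a pluriharmonic $v$, and then upgrade to a pointwise identity by recovering the value of a psh function from its $L^{1}_{\mathrm{loc}}$ class (you use shrinking ball averages, the paper uses mollifications $u\star\rho_{\varepsilon}$ --- the same mechanism). One small slip: the ball average of a psh function is \emph{increasing}, not decreasing, in $r$; this does not affect your argument, since you only use that the average tends to $\varphi(z)$ as $r\to 0^{+}$.
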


\begin{proof}
As $\mathbb{B}^{n}$ is a simply connected pseudoconvex domain in $\mathbb{C}^{n}$,
then there are three plurisubharmonic functions $u$, $u_{1}$ and $u_{2}$,
satisfying
$T=dd^{c}u$, $T_{1}=dd^{c}u_{1}$ and $T_{2}=dd^{c}u_{2}$.

Note that $dd^{c}(u-u_{1}-u_{2})=0$,
then $u-u_{1}-u_{2}$ is pluriharmonic in the sense of {distributions},
i.e.
there exists a pluriharmonic function $v$ such that $v=u-u_{1}-u_{2}$ almost everywhere in the sense of the Lebesgue measure on $\mathbb{B}^{n}$.
Then it follows that
$u=u_{1}+u_{2}+v$ almost everywhere in the sense of the Lebesgue measure on $\mathbb{B}^{n}$.
Considering the convolution,
it follows that
$$u\star\rho_{\varepsilon}=u_{1}\star\rho_{\varepsilon}+u_{2}\star\rho_{\varepsilon}+v\star\rho_{\varepsilon},$$
everywhere on $\mathbb{B}^{n}((0,\cdots,0),1-\varepsilon)$.
Note that $u=\lim_{\varepsilon\to0}u\star\rho_{\varepsilon}$,
$u_{1}=\lim_{\varepsilon\to0}u_{1}\star\rho_{\varepsilon}$,
$u_{2}=\lim_{\varepsilon\to0}u_{2}\star\rho_{\varepsilon}$,
$v=\lim_{\varepsilon\to0}v\star\rho_{\varepsilon}$.
Then $u=u_{1}+u_{2}+v$ everywhere on $\mathbb{B}^{n}$.
\end{proof}

By Siu's decomposition, we have
$$dd^{c}u=\sum_{j}\lambda_{j}[H_{j}]+\sum_{j'}\lambda_{j'}[H_{j'}]+S$$
where $\lambda_{j}>0$, $H_{j}$ is the analytic set through $x_{0}$ and $H_{j'}$ is the analytic set not through $x_{0}$,
and $S$ is the current satisfying that dimension of $\{\nu(u,z)\geq c\}$ is {at most equal to} $n-2$.
\begin{Corollary}
\label{cor:siu_comp}
There exist plurisubharmonic functions $u_{A}$, $u_{A'}$ and $u_{0}$,
satisfying
$$\sum_{j}\lambda_{j}[H_{j}]=dd^{c}u_{A},$$
$$\sum_{j'}\lambda_{j'}[H_{j'}]=dd^{c}u_{A'}$$
and
$$S=dd^{c}u_{0},$$
such that $u=u_{A}+u_{A'}+u_{0}+v$ everywhere on $\mathbb{B}^{n}$,
where $u_{A'}$ satisfying $\nu(u_{A'},x_{0})=0$.
\end{Corollary}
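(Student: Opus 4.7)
The plan is to peel off the three summands of the Siu decomposition one at a time by iterating Lemma \ref{l:composition}. Set $T_A := \sum_j \lambda_j[H_j]$ and $T_{A'} := \sum_{j'}\lambda_{j'}[H_{j'}]$, so that $dd^c u = T_A + (T_{A'}+S)$ as positive closed $(1,1)$-currents on $\mathbb{B}^n$. Since each of $T_A$, $T_{A'}$, $S$ and $T_{A'}+S$ is a positive closed $(1,1)$-current on the simply connected pseudoconvex domain $\mathbb{B}^n$, the potential-existence argument used inside Lemma \ref{l:composition} applies to each of them and produces global psh potentials.

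First I would apply Lemma \ref{l:composition} to the decomposition $dd^c u = T_A + (T_{A'}+S)$. This yields psh functions $u_A$ and $w$ with $dd^c u_A = T_A$, $dd^c w = T_{A'}+S$, and a pluriharmonic function $v_1$ with $u = u_A + w + v_1$ everywhere on $\mathbb{B}^n$. A second application of the same lemma to $dd^c w = T_{A'} + S$ produces psh functions $u_{A'}, u_0$ with $dd^c u_{A'} = T_{A'}$, $dd^c u_0 = S$, and a pluriharmonic $v_2$ with $w = u_{A'} + u_0 + v_2$ everywhere on $\mathbb{B}^n$. Setting $v := v_1 + v_2$ (still pluriharmonic) gives the required identity $u = u_A + u_{A'} + u_0 + v$ on all of $\mathbb{B}^n$.

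It then remains to check $\nu(u_{A'}, x_0) = 0$. I would use the standard identification of the Lelong number of a psh function with the Lelong number of its $dd^c$ as a positive closed current, together with countable additivity of Lelong numbers on locally finite positive linear combinations of currents of integration, to write
\[
\nu(u_{A'}, x_0) \;=\; \nu(T_{A'}, x_0) \;=\; \sum_{j'} \lambda_{j'}\,\nu([H_{j'}], x_0).
\]
Each term vanishes because $x_0 \notin H_{j'}$ by construction, so $[H_{j'}]$ is the zero current in a neighborhood of $x_0$ and its Lelong number there is $0$. The only delicate point, and the main thing to be careful about, is the handling of the (possibly countably infinite) sum $T_{A'}$: one must check that it converges as a positive closed current on $\mathbb{B}^n$ (guaranteed by Siu's decomposition), admits a global psh potential (guaranteed by $\mathbb{B}^n$ being simply connected pseudoconvex), and that Lelong-number additivity extends to the countable sum — each of which is a standard fact. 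Apart from that, the argument is pure bookkeeping around two applications of Lemma \ref{l:composition}.
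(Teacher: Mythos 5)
The paper states Corollary \ref{cor:siu_comp} without writing out a separate proof, and your argument is exactly the intended one: iterate Lemma \ref{l:composition} twice to split $dd^c u$ as $T_A + (T_{A'}+S)$ and then $T_{A'}+S$ as $T_{A'}+S$, adjusting by the pluriharmonic difference so that the given $u$ (rather than the abstractly constructed potential) appears on the left; and obtain $\nu(u_{A'},x_0)=0$ from the identification $\nu(u_{A'},x_0)=\nu(T_{A'},x_0)$ together with additivity of Lelong numbers over the (possibly countable) positive sum $T_{A'}=\sum_{j'}\lambda_{j'}[H_{j'}]$, each term of which has Lelong number $0$ at $x_0$ since $x_0\notin H_{j'}$. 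You correctly flag the only mildly delicate point — that individual $[H_{j'}]$ vanishing near $x_0$ does not by itself make $T_{A'}$ vanish near $x_0$ when the $H_{j'}$ accumulate, so one must really invoke countable additivity of Lelong numbers (which follows from additivity of the mass integrals at each radius together with monotonicity in $r$). This matches the paper's approach; no gap.
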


\begin{Remark}
\label{r:siu_comp}
As $H_{j'}$ is the analytic set not through $x_{0}$,
then the dimension of $(\{\nu(u_{A'},z)\geq c>0\},x_{0})$ is not bigger than $n-2$ for any $c$.
\end{Remark}

\subsection{Residual part in Siu's decomposition (Situation A.1)}

\begin{Lemma}
\label{l:dim_countable_fibre}
Let $\{Y_{j}\}$ be a {countable family} of analytic sets on $\mathbb{B}^{n}(x_{0},1)$ satisfying $dim Y_{j}\leq n-2$.
Let $H_{j'}$ be a {countable family} of irreducible analytic sets on $\mathbb{B}^{n}(x_{0},1)$ satisfying $dim H_{j'}=n-1$.
Then for almost all $(z_{1}:\cdots:z_{n-1})$ in the sense of the Lebesgue measure on $\mathbb{C}\mathbb{P}^{n-2}$,
we have

$(1)$ dimension of $Y_{j}\cap p^{-1}(z_{1}:\cdots:z_{n})$ is zero;

$(2)$ dimension of $H_{j}\cap p^{-1}(z_{1}:\cdots:z_{n})$ is not bigger than $1$.
\end{Lemma}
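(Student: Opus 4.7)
The plan is to establish each of (1) and (2) for a single analytic set by the same Hausdorff-measure/Fubini argument used in Lemma \ref{l:dimen_fibre_zero}, and then to conclude by taking a countable union of Lebesgue-null subsets of $\mathbb{C}\mathbb{P}^{n-2}$.

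For part (1), I would fix $j$ and reduce to the case that $Y_{j}$ is irreducible. Restricting $p$ to $Y_{j}\setminus X$ gives a holomorphic map to $\mathbb{C}\mathbb{P}^{n-2}$, and the goal is to show that the bad set
\[
E_{j} := \{L \in \mathbb{C}\mathbb{P}^{n-2} : \dim(Y_{j}\cap p^{-1}(L))\geq 1\}
\]
is Lebesgue-null. If instead $E_{j}$ had positive measure, the co-area/Fubini calculation that integrates the Hausdorff $2$-measure of the fibres against Lebesgue measure on $E_{j}$ would force $\mathcal{H}^{2n-2}(Y_{j})>0$, contradicting $\dim_{\mathbb{C}}Y_{j}\leq n-2$ (which forces $\mathcal{H}^{d}(Y_{j})=0$ for every $d>2n-4$). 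This is exactly the computation carried out in the proof of Lemma \ref{l:dimen_fibre_zero} with $k=2$.

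For part (2), I would fix $j'$ and note that since $\overline{p^{-1}(L)}$ is a $2$-plane and $\dim H_{j'}=n-1$, the generic fibre of $p|_{H_{j'}\setminus X}$ has complex dimension exactly $1$. A fibre $H_{j'}\cap p^{-1}(L)$ has complex dimension $\geq 2$ precisely when the whole $2$-plane $\overline{p^{-1}(L)}$ is contained in $H_{j'}$; if this happened on a positive-measure set $A\subset\mathbb{C}\mathbb{P}^{n-2}$, the same Fubini argument (now with $\mathcal{H}^{4}$ of the fibre) would give $\mathcal{H}^{2n}(H_{j'})>0$, contradicting $\dim_{\mathbb{C}}H_{j'}=n-1$. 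Equivalently, the union $\bigcup_{L\in A}\overline{p^{-1}(L)}$ would already force $\dim_{\mathbb{C}}H_{j'}\geq n$.

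Finally, the countable union $\bigcup_{j}E_{j}\cup\bigcup_{j'}E_{j'}$ is a countable union of Lebesgue-null sets, hence Lebesgue-null, and outside of it both (1) and (2) hold simultaneously. The only delicate point I anticipate is the apparent worry when $X$ itself sits inside some $Y_{j}$ or $H_{j'}$; this is harmless because $p^{-1}(L)$ is by definition a subset of $\mathbb{C}^{n}\setminus X$, so the contribution of $X$ is automatically removed from the fibres and the Hausdorff estimate above is carried out on $Y_{j}\setminus X$ (resp.\ $H_{j'}\setminus X$). The main effort is therefore just a careful bookkeeping repetition of Lemma \ref{l:dimen_fibre_zero} combined with countable additivity of null sets.
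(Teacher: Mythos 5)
Your proposal is correct and matches the paper's intended argument: the paper's one-line proof simply invokes Lemma \ref{l:dimen_fibre_zero} for each $Y_{j}$ (with $k=2$) and each $H_{j'}$ (with $k=1$) and takes a countable union of the null exceptional sets, which is exactly what you carry out with the Hausdorff-measure/Fubini estimate. Your additional observation --- that the hypothesis $H\cap X=\{(0,\cdots,0)\}$ from Lemma \ref{l:dimen_fibre_zero} is not imposed here, but that the argument is unaffected because $p^{-1}(L)\subset\mathbb{C}^{n}\setminus X$ by definition --- correctly fills in a detail the paper leaves implicit.
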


\begin{proof}
By Lemma \ref{l:dimen_fibre_zero}, one can obtain the present lemma.
\end{proof}

Using Lemma \ref{l:dim_countable_fibre},
one can obtain the following corollary:

\begin{Corollary}
\label{cor:dimen_fibre_zero_n-2}
Let $dd^{c}u:=S$ be the positive closed current on $\mathbb{B}^{n}$ satisfying that the dimension of $(\{z|\nu(u,z)\geq c\},x_{0})$
are smaller than $n-1$ for any $c>0$.
Then the dimension of $\{z|\nu(u|_{p^{-1}(z_{1}:\cdots:z_{n-1})\cup X},z)\geq c\}$ is zero
for almost all $(z_{1}:\cdots:z_{n-1})$ in the sense of the Lebesgue measure on $\mathbb{C}\mathbb{P}^{n-2}$.
Moreover,
for almost all $(z_{1}:\cdots:z_{n-1})$,
the dimension $\{z|\nu(u|_{p^{-1}(z_{1}:\cdots:z_{n-1})\cup X},z)\geq c\}$ is zero for any $c>0$.
\end{Corollary}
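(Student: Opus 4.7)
The plan is to combine Siu's analyticity theorem with Lemma \ref{l:dim_countable_fibre}. By Siu's theorem applied to $dd^{c}u$, for each $c>0$ the super-level set $E_{c}:=\{z\in\mathbb{B}^{n}:\nu(u,z)\ge c\}$ is an analytic subset of $\mathbb{B}^{n}$, whose germ at $x_{0}$ has complex dimension at most $n-2$ by hypothesis. Choosing the countable sequence $c_{k}:=1/k$ yields a family $\{E_{c_{k}}\}_{k\in\mathbb{N}}$ of analytic sets of dimension $\leq n-2$, to which Lemma \ref{l:dim_countable_fibre} applies (with empty family of $(n-1)$-dimensional members). It gives, for almost every $\alpha=(z_{1}:\cdots:z_{n-1})\in\mathbb{CP}^{n-2}$, that $\dim(E_{c_{k}}\cap p^{-1}(\alpha))=0$ for every $k$, whence $\dim(E_{c}\cap p^{-1}(\alpha))=0$ for every $c>0$ and every such $\alpha$.

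Next I would pass from the ambient super-level sets to $\tilde E_{c}^{\alpha}:=\{z\in L_{\alpha}:\nu(u|_{L_{\alpha}},z)\geq c\}$, where $L_{\alpha}:=p^{-1}(\alpha)\cup X$ is a $2$-plane. By Siu applied on the $2$-plane $L_{\alpha}$, $\tilde E_{c}^{\alpha}$ is analytic in $L_{\alpha}$ of complex dimension at most $1$; I would rule out a $1$-dimensional component by invoking that, for almost every $\alpha$, restriction to $L_{\alpha}$ preserves Lelong numbers generically, so that any irreducible $1$-dimensional component $C\subset\tilde E_{c}^{\alpha}$ injects (as germs at $x_{0}$) into $E_{c}\cap L_{\alpha}$. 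But $E_{c}\cap L_{\alpha}=(E_{c}\cap p^{-1}(\alpha))\cup(E_{c}\cap X)$ has dimension $0$, using the previous paragraph for the first piece and the generic choice of $X$ (so that $X\cap E_{c}\subset\{x_{0}\}$) for the second. This contradicts $\dim C=1$ and forces $\dim\tilde E_{c}^{\alpha}=0$. The \emph{moreover} clause then follows by taking the countable intersection over $k$ of the full-measure sets of $\alpha$ corresponding to $c=c_{k}$, since $\tilde E_{c}^{\alpha}$ is monotone decreasing in $c$.

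The main obstacle is the generic-preservation step, namely $\nu(u|_{L_{\alpha}},z)=\nu(u,z)$ for almost every $z\in L_{\alpha}$ and almost every $\alpha$. This is a Fubini argument on the incidence variety $\{(\alpha,z):z\in L_{\alpha}\}$ combined with the pointwise form of Lemma \ref{l:lelong_fibre} (for a fixed point, a generic $2$-plane through it preserves the Lelong number). It is not purely formal because we have restricted the family of $2$-planes to those containing the fixed line $X$, so the Fubini must be executed on this constrained incidence rather than on the full Grassmannian of $2$-planes in $\mathbb{C}^{n}$: the key input is that at $z\in X\setminus\{x_{0}\}$ the family $\{L_{\alpha}\}$ still sweeps an entire $\mathbb{CP}^{n-2}$ of $2$-planes through $z$ (enough to apply pointwise genericity), while for $z\notin X$ the unique plane $L_{\alpha(z)}$ is generic for almost every $\alpha$ by the usual measure-theoretic swap.
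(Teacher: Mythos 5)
Your plan correctly identifies a subtlety that the paper's one-line proof glosses over: Lemma~\ref{l:dim_countable_fibre} controls the slices $E_{c}\cap L_{\alpha}$ of the \emph{ambient} super-level sets $E_{c}=\{z:\nu(u,z)\geq c\}$, whereas the corollary asserts the vanishing of the dimension of $\tilde E_{c}^{\alpha}=\{z:\nu(u|_{L_{\alpha}},z)\geq c\}$. Since $\nu(u|_{L_{\alpha}},z)\geq\nu(u,z)$, the set $\tilde E_{c}^{\alpha}$ a priori \emph{contains} $E_{c}\cap L_{\alpha}$, so the paper's proof does not by itself rule out a $1$-dimensional component of $\tilde E_{c}^{\alpha}$. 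You are right that this passage must be addressed.

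However, the mechanism you propose for closing this passage is not strong enough. You want to rule out a $1$-dimensional irreducible component $C\subset\tilde E_{c}^{\alpha}$ by invoking ``$\nu(u|_{L_{\alpha}},z)=\nu(u,z)$ for almost every $z\in L_{\alpha}$ and almost every $\alpha$.'' That statement is an almost-everywhere equality with respect to Lebesgue measure on the $2$-plane $L_{\alpha}$, and the putative curve $C$ has Lebesgue measure zero in $L_{\alpha}$. The exceptional null set on which equality can fail is, for all you know, exactly the union of all $1$-dimensional components of the various $\tilde E_{c}^{\alpha}$, so the a.e.\ equality tells you nothing about points of $C$ and you cannot deduce $C\subset E_{c'}\cap L_{\alpha}$ (nor, hence, a contradiction with $\dim(E_{c'}\cap L_{\alpha})=0$). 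What is needed is the stronger statement that, for almost every $\alpha$, one has the exact set identity $\{z\in L_{\alpha}:\nu(u|_{L_{\alpha}},z)=0\}=\{z:\nu(u,z)=0\}\cap L_{\alpha}$, i.e.\ a conclusion valid for \emph{every} $z$ in the fibre, not just almost every $z$. This is precisely the content of Lemma~\ref{l:zero_lelong} in the paper (whose proof goes through Skoda's integrability theorem and Fubini, not through a pointwise slicing result such as Lemma~\ref{l:lelong_fibre}), and it is the tool the paper deploys in the proof of the sibling Corollary~\ref{cor:dimen_fibre_zero_n-1}. With Lemma~\ref{l:zero_lelong} in hand, one gets $\tilde E_{c}^{\alpha}\subseteq\bigcup_{k}E_{1/k}\cap L_{\alpha}$ outright (as sets, for a.e.\ $\alpha$), and then Siu analyticity of $\tilde E_{c}^{\alpha}$ on $L_{\alpha}$ together with Lemmas~\ref{l:dim_countable_fibre} and~\ref{l:dim_countable} gives $\dim\tilde E_{c}^{\alpha}=0$. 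So replace your generic-preservation step by an appeal to Lemma~\ref{l:zero_lelong}; as written the argument has a genuine gap.
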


\begin{proof}
Using Lemma \ref{l:dim_countable_fibre},
one can obtain that
for almost all $(z_{1}:\cdots:z_{n-1})$,
the complex dimension of analytic set $p^{-1}(z_{1}:\cdots:z_{n-1}\cup X)\cap\{z|\nu(u,z)\geq c>0\}$ at $x_{0}$
is zero for any given $c>0$.
\end{proof}

\subsection{hypersurfaces not through $x_{0}$ (Situation A.2.1)}\label{sec:level_set}
$\\$

In the present section, we give some properties of plurisubharmonic function $u$ on $\Delta^{n}$ with zero Lelong number on any point in $\Delta$.

Let $p_{n}:\Delta^{n}\to\Delta^{n-2}$, where $p_{n}(z_{1},\cdots,z_{n})=(z'):=(z_{1},\cdots,z_{n-2})$.
\begin{Lemma}
\label{l:zero_lelong}
For almost all $z'\in\Delta^{n-2}$ (in the sense of the Lebesgue measure on $\Delta^{n-2}$),
the level set of Lelong numbers of $u|_{p_{n}^{-1}(z')}$
satisfies
$$\{z|\nu(u|_{p_{n}^{-1}(z')},z)=0\}=(\{z|\nu(u,z)=0\}\cap p_{n}^{-1}(z')).$$
\end{Lemma}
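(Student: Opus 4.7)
The inclusion $\subseteq$ is immediate from the general Lelong number restriction inequality: for any $z' \in \Delta^{n-2}$ with $u|_{p_n^{-1}(z')} \not\equiv -\infty$ (a condition satisfied on a full Lebesgue measure set by Fubini), one has $\nu(u|_{p_n^{-1}(z')}, z) \geq \nu(u, z) \geq 0$, so the vanishing of the former forces the vanishing of the latter. For $z'$ in the exceptional null set the left-hand side is empty and the inclusion is vacuous.

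For the opposite inclusion, my plan is to combine Siu's decomposition of $dd^c u$ with a Fubini-type dimension argument on the fibers of $p_n$. By Siu's theorem write
\[ dd^c u = \sum_{j} \lambda_j [H_j] + R, \]
where $\{H_j\}$ is an at most countable family of irreducible hypersurfaces with generic Lelong number $\lambda_j > 0$ and $R$ is a positive closed current whose upper Lelong level sets $Y^R_c := \{\nu(R, \cdot) \geq c\}$ are analytic of complex dimension at most $n-2$ for every $c > 0$. Hence $\{\nu(u, \cdot) > 0\} = \bigcup_j H_j \cup \bigcup_{k \geq 1} Y^R_{1/k}$, a countable union of analytic subsets of $\Delta^n$.

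A Hausdorff-measure argument analogous to the proof of Lemma \ref{l:dimen_fibre_zero}, applied this time to the projection $p_n$, yields a full Lebesgue-measure set of $z' \in \Delta^{n-2}$ for which each $H_j$ meets the fiber $p_n^{-1}(z')$ in an analytic curve (or emptily) and each $Y^R_{1/k}$ meets $p_n^{-1}(z')$ in a discrete set. Invoking the slicing theorem for positive closed currents, for such generic $z'$ the current $dd^c (u|_{p_n^{-1}(z')})$ coincides with the slice $(dd^c u)|_{p_n^{-1}(z')}$ and decomposes as $\sum_j \lambda_j [H_j \cap p_n^{-1}(z')] + R|_{p_n^{-1}(z')}$, with the Lelong numbers of the sliced components equal to those of the original at the corresponding points. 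Every $z \in p_n^{-1}(z')$ with $\nu(u|_{p_n^{-1}(z')}, z) > 0$ must therefore lie on some $H_j \cap p_n^{-1}(z')$ or at an isolated point of some $Y^R_{1/k} \cap p_n^{-1}(z')$, and in either case $\nu(u, z) > 0$. Passing to complements inside $p_n^{-1}(z')$ gives the desired reverse inclusion.

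The main obstacle will be controlling the sliced residual current $R|_{p_n^{-1}(z')}$: one needs that, for generic $z'$, no new positive-Lelong-number points appear in the slice beyond the traces of the analytic components $Y^R_{1/k} \cap p_n^{-1}(z')$. This generic absence of ``Lelong jumping'' under slicing is the technical heart of the argument and is a consequence of the a.e.-equality version of the slice Lelong number identity (weaker than the pluripolarity result of Ben Messaoud--El Mir recalled after Remark \ref{cor:line_Lelong}), which suffices here since we only need the dichotomy zero versus positive.
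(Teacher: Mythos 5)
Your easy inclusion is fine, but the reverse inclusion has a genuine gap at exactly the place you flag. The ``a.e.-equality version of the slice Lelong number identity'' you invoke is not a result available off the shelf in the form you need. The Ben Messaoud--El Mir theorem (and Siu's Lemma \ref{l:lelong_fibre}) fix a point $x_{0}$ and vary the slice through $x_{0}$ over the Grassmannian; what your argument needs is that for almost every \emph{fiber} $p_{n}^{-1}(z')$ the identity $\nu(u|_{p_{n}^{-1}(z')},z)=0\Leftrightarrow\nu(u,z)=0$ holds \emph{simultaneously at every $z$ in that fiber}. Since each $z$ lies in exactly one fiber $p_{n}^{-1}(p_{n}(z))$, you cannot trade the two quantifiers by a Fubini over $(z,\text{slice})$. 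Moreover, after you peel off the divisorial part, the needed statement for the residual current $R$ is precisely Lemma \ref{l:zero_lelong} again for a potential of $R$ --- so the decomposition step does not reduce the problem to something simpler, it reproduces it for a current whose Lelong level sets happen to have codimension $\geq 2$. You would still need a mechanism showing that, for a.e.\ $z'$, the sliced current $R|_{p_{n}^{-1}(z')}$ acquires no positive Lelong number off $\bigcup_{k}Y^{R}_{1/k}\cap p_{n}^{-1}(z')$, and no argument is given.

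The paper proves the lemma by an entirely different and more elementary route that bypasses Siu decomposition and current slicing. It uses Skoda's integrability criterion (Theorem \ref{t:skoda}) in both directions together with Fubini: $e^{-2ku}$ is locally integrable on the open set $\{\nu(u,\cdot)<1/k\}$, so by Fubini its restriction to a.e.\ fiber is integrable there; on a two--dimensional fiber, integrability of $e^{-2ku|_{p_n^{-1}(z')}}$ near $z$ forces $\nu(u|_{p_n^{-1}(z')},z)<2/k$. Discarding the countable union of null sets in $z'$ and letting $k\to\infty$ gives $\nu(u|_{p_n^{-1}(z')},z)=0$ at every $z$ in the fiber with $\nu(u,z)=0$. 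This produces exactly the ``no Lelong jumping for a.e.\ fiber'' statement that your proposal assumes; I would recommend replacing the slicing heuristics with this Skoda--Fubini argument, which is self-contained and needs no slicing theory for currents.
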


\begin{Remark}
\label{rem:zero_lelong}
It is known that $\nu(u|_{H},z)\geq \nu(u,z)$ for any regular complex variety $H\subset\Delta^{n}$ and $z\in H$,
which implies $\{z|\nu(u|_{p_{n}^{-1}(z')},z)=0\}\subseteq(\{z|\nu(u,z)=0\}\cap p_{n}^{-1}(z'))$ for any $z'\in\Delta^{n-2}$.
Then it suffices to prove
$$\{z|\nu(u|_{p_{n}^{-1}(z')},z)=0\}\supseteq(\{z|\nu(u,z)=0\}\cap p_{n}^{-1}(z')).$$
\end{Remark}

\begin{proof}(proof of Proposition \ref{l:zero_lelong})
By Theorem \ref{t:skoda},
it follows that $e^{-2ku}$ is locally integrable on the open subset $\{z|\nu(u,z)<\frac{1}{k}\}\subset\Delta^{n}$.

By Fubini's Theorem,
it follows that
the Lebesgue measure of
$$B_{k}:=\{z'|\int_{p_{n}^{-1}(z')\cap K_{m}}e^{-2ku|_{p_{n}^{-1}(z')}}=+\infty\}$$
is zero on $\Delta^{n-2}$
for any compact subset $K_{m}\subset\{z|\nu(u,z)<\frac{1}{k}\}$ satisfying $\cup_{m=1}^{\infty}K_{m}^{\circ}=\{z|\nu(u,z)<\frac{1}{k}\}$.
$$\int_{p_{n}^{-1}(z')\cap K_{m}}e^{-2ku|_{p_{n}^{-1}(z')}}<+\infty$$
(for any $m$) shows that
\begin{equation}
\label{equ:20140711a}
\nu(u|_{p_{n}^{-1}(z')},z)<\frac{2}{k}
\end{equation}
for any $z'\in(\Delta^{n-2}\setminus\cup_{l=1}^{k}B_{l})$ and
$z\in (p_{n}^{-1}(\Delta^{n-2}\setminus\cup_{l=1}^{k}B_{l})\cap\{z|\nu(u,z)<\frac{1}{k}\})$
(using the Skoda's result: $\nu(u|_{p_{n}^{-1}(z')},z)\geq 2$ implies that $e^{-2u|_{p_{n}^{-1}(z')}}$ is not integrable near $z\in p_{n}^{-1}(z')$).

Using inequality \ref{equ:20140711a},
we obtain
\begin{equation}
\label{equ:20140711b}
\nu(u|_{p_{n}^{-1}(z')},z)<\frac{2}{k}
\end{equation}
for any $k=1,2,\cdots$,
where $z'\in(\Delta^{n-2}\setminus\cup_{l=1}^{\infty}B_{l})$ and
$z\in (p_{n}^{-1}(\Delta^{n-2}\setminus\cup_{k=1}^{\infty}B_{k})\cap\{z|\nu(u,z)=0\})$.
Letting $k$ go to infinity,
using inequality \ref{equ:20140711b},
we obtain
$$\nu(u|_{p_{n}^{-1}(z')},z)=0$$
where $z'\in(\Delta^{n-2}\setminus\cup_{l=1}^{\infty}B_{l})$ and
$z\in (p_{n}^{-1}(\Delta^{n-2}\setminus\cup_{k=1}^{\infty}B_{k})\cap\{z|\nu(u,z)=0\})$,
i.e.
$$\{z|\nu(u|_{p_{n}^{-1}(z')},z)=0\}\supseteq(\{z|\nu(u,z)=0\}\cap p_{n}^{-1}(z')),$$
for any $z'\in (\Delta^{n-2}\setminus\cup_{k=1}^{\infty}B_{k})$.
Note that the Lebesgue measure of $\cup_{k=1}^{\infty}B_{k}$ is zero on $\Delta^{n-2}$.
Then we obtain the present Lemma.
\end{proof}

\begin{Lemma}
\label{l:dim_countable}
Let $\{Y_{j}\}$ be a {countable family} of analytic sets on $\mathbb{B}^{n}(x_{0},1)$ satisfying $dim Y_{j}\leq m-2$.
Let $H_{j'}$ be a {countable family} of irreducible analytic sets on $\mathbb{B}^{n}(x_{0},1)$ satisfying $dim H_{j'}=m-1$.
Let $H$ be an $(m-1)$-dimensional irreducible analytic set on $\mathbb{B}^{n}(x_{0},1)$ satisfying $H\subset(\cup_{j}Y_{j})\cup(\cup_{j'}H_{j'})$.
Then
there exists a $j'$ such that $H=H_{j'}$.
\end{Lemma}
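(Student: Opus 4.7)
The plan is to argue by contradiction using the Baire category theorem applied to the regular locus of $H$. Assume that $H \neq H_{j'}$ for every index $j'$, and derive that $H$ cannot be covered by the given countable union.

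First I would analyze the individual intersections. Since each $Y_j$ has complex dimension at most $m-2$, the intersection $H \cap Y_j$ is an analytic subset of $H$ of dimension at most $m-2$, hence a proper analytic subset of the irreducible $(m-1)$-dimensional variety $H$. For each $H_{j'}$, both $H$ and $H_{j'}$ are irreducible of dimension $m-1$; by the standard dimension inequality for intersections of irreducible analytic sets, either $H = H_{j'}$ or $H \cap H_{j'}$ is a proper analytic subset of $H$ (hence of dimension at most $m-2$). By the contradiction hypothesis, the second alternative occurs for every $j'$.

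Next I would pass to the regular locus $H_{\mathrm{reg}}$, which is a nonempty connected complex manifold of dimension $m-1$ (using irreducibility of $H$). The sets $H \cap Y_j$ and $H \cap H_{j'}$, being proper analytic subsets of $H$, each intersect $H_{\mathrm{reg}}$ in a closed nowhere dense subset (proper analytic subsets of a connected complex manifold are closed with empty interior). The inclusion $H \subset (\cup_j Y_j) \cup (\cup_{j'} H_{j'})$ then restricts to the statement that $H_{\mathrm{reg}}$ is a countable union of closed nowhere dense subsets of itself.

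The final step is to invoke the Baire category theorem: $H_{\mathrm{reg}}$ is a locally compact Hausdorff space (equivalently, a complete metric space in its induced metric), hence a Baire space, so it cannot be written as a countable union of closed nowhere dense subsets. This contradiction shows that $H = H_{j'}$ for some $j'$. I do not anticipate a serious obstacle here; the only point requiring a little care is ensuring that the intersection $H \cap H_{j'}$ is proper whenever $H \neq H_{j'}$, which is immediate from irreducibility and equal dimension.
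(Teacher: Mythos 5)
Your proof is correct, but it takes a genuinely different route from the paper's. The paper argues measure-theoretically: since $H\neq H_{j'}$ forces $\dim(H\cap H_{j'})\le m-2$, the $(2m-3)$-dimensional Hausdorff measure of each $H\cap H_{j'}$ vanishes (the paper has a typo writing $2n-3$); hence so does that of the countable union $\bigcup_{j'}(H\cap H_{j'})$, leaving $H\setminus\bigcup_{j'}H_{j'}$ of positive $(2m-3)$-dimensional Hausdorff measure, yet this set is contained in $\bigcup_j Y_j$, which again has vanishing $(2m-3)$-measure --- a contradiction. You instead observe that each $H\cap Y_j$ and each $H\cap H_{j'}$ (when $H\neq H_{j'}$) is a proper analytic subset of the irreducible $H$, hence meets the connected complex manifold $H_{\mathrm{reg}}$ in a closed nowhere dense set, and invoke the Baire category theorem on the locally compact Hausdorff space $H_{\mathrm{reg}}$. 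Both arguments hinge on the same key fact that equal-dimensional distinct irreducible varieties meet properly; after that, the paper pays with a bit of geometric measure theory (Hausdorff measures of analytic sets) while you pay with elementary point-set topology (Baire category plus the identity theorem to rule out interior points). Your approach is arguably more elementary; the paper's has the cosmetic advantage of reusing the same Hausdorff-measure technique already deployed in Lemma~\ref{l:dimen_fibre_zero}. One small imprecision in your write-up: $H_{\mathrm{reg}}$ need not be complete in the metric induced from $\mathbb{C}^n$, but it is locally compact Hausdorff (and second countable, hence Polish), which is what the Baire category theorem requires, so the conclusion stands.
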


\begin{proof}
Lemma \ref{l:dim_countable} can be proved by contradiction:
if {the conclusion does not hold}, then the $2n-3$ dimensional Hausdorff measure of $H_{j'}\cap H$ is zero for any $j'$.
It follows that
the $2m-3$ dimensional Hausdorff measure of $H\setminus(\cup_{j'}H_{j'})$ is positive.
Note that $(H\setminus(\cup_{j'}H_{j'}))\subset\cup_{j}Y_{j}$,
which contradicts to the fact that $2m-3$ dimensional Hausdorff measure of $Y_{j}$ is zero for any $j$,
then Lemma \ref{l:dim_countable} has been proved.
\end{proof}

Using Lemma \ref{l:dim_countable_fibre} and Lemma \ref{l:dim_countable} and combining with Lemma \ref{l:zero_lelong},
one can obtain the following corollary:

\begin{Corollary}
\label{cor:dimen_fibre_zero_n-1}
Let $dd^{c}u:=\sum_{j'}\lambda_{j'}[H_{j'}]$ be {a} positive closed current on $\mathbb{B}^{n}$,
where $H_{j'}'s$ are the hypersurfaces {that do not pass through} $x_{0}$.
Then for any given $c>0$ the dimension of $\{z|\nu(u|_{p^{-1}(z_{1}:\cdots:z_{n-1})},z)\geq c\}$ is zero
for almost all $(z_{1}:\cdots:z_{n-1})$ in the sense of the Lebesgue measure on $\mathbb{C}\mathbb{P}^{n-2}$.
Moreover,
for almost all $(z_{1}:\cdots:z_{n-1})$,
the dimension $\{z|\nu(u|_{p^{-1}(z_{1}:\cdots:z_{n-1})},z)\geq c\}$ is zero for any $c>0$.
\end{Corollary}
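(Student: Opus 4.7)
The plan is to argue by contradiction: for a.e.\ direction $(z_{1}:\cdots:z_{n-1})\in\mathbb{C}\mathbb{P}^{n-2}$, writing $L:=\overline{p^{-1}(z_{1}:\cdots:z_{n-1})}$ for the closed $2$-plane fibre, I will show that the analytic set $\{z\,|\,\nu(u|_{L},z)\geq c\}$ admits no $1$-dimensional component meeting a sufficiently small neighbourhood of $x_{0}$, so that its germ at $x_{0}$ is $0$-dimensional.

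First I would choose an open neighbourhood $U$ of $x_{0}$ disjoint from every $H_{j'}$ with $\lambda_{j'}\geq c$. This is possible because, in the present situation, $\nu(u,x_{0})=0$ (no $H_{j'}$ passes through $x_{0}$ and the residual part of $dd^{c}u$ vanishes), so by Siu's theorem \cite{siu74} the super-level set $\{z\,|\,\nu(u,z)\geq c\}$ is a closed analytic subset of $\mathbb{B}^{n}$ not containing $x_{0}$, and therefore does not accumulate at $x_{0}$.

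Next I would combine the three cited lemmas. By Lemma \ref{l:dim_countable_fibre} one may assume, for a.e.\ direction, that $L$ is not contained in any $H_{j'}$, that each $H_{j'}\cap L$ is a $1$-dimensional curve in $L$, and that two distinct such curves meet in a discrete subset of $L$. By (the analogue for the projection $p$ of) Lemma \ref{l:zero_lelong} one has $\{z\in L\,|\,\nu(u|_{L},z)>0\}=\{z\in L\,|\,\nu(u,z)>0\}\subseteq\bigcup_{j'}H_{j'}\cap L$. Suppose now that the analytic set $\{z\in L\,|\,\nu(u|_{L},z)\geq c\}$ had a $1$-dimensional irreducible component $C$ with $C\cap U\neq\emptyset$. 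Lemma \ref{l:dim_countable} applied inside the $2$-dimensional $L$ then forces $C=H_{j'_{0}}\cap L$ for some $j'_{0}$. The local representation $u=\lambda_{j'_{0}}\log|f_{j'_{0}}|+(\text{pluriharmonic})$ near a generic smooth point of $C$ (where only $H_{j'_{0}}$ passes through the point and $L$ meets $H_{j'_{0}}$ transversally) yields $\nu(u|_{L},z)=\lambda_{j'_{0}}$; so $\lambda_{j'_{0}}\geq c$, and $C\cap U\neq\emptyset$ implies $H_{j'_{0}}\cap U\neq\emptyset$, contradicting the choice of $U$. The ``moreover'' clause then follows by intersecting over $c=1/k$, $k\geq 1$, the resulting full-measure subsets of $\mathbb{C}\mathbb{P}^{n-2}$ and using monotonicity of the super-level sets in $c$.

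The main obstacle I anticipate is that Lemma \ref{l:zero_lelong} is stated only for the coordinate projection $p_{n}$ onto $(z_{1},\ldots,z_{n-2})$, while here I need its analogue for the $(n-2)$-parameter family of $2$-planes $\{\overline{p^{-1}(z_{1}:\cdots:z_{n-1})}\}$ through $X$. I would handle this in each affine chart of $\mathbb{C}\mathbb{P}^{n-2}$ by a parameter-dependent linear change of coordinates turning the fibres of $p$ into the coordinate fibres of Lemma \ref{l:zero_lelong}: for example, in the chart $\{z_{1}\neq 0\}$, setting $w_{i}:=z_{i}/z_{1}$ and $y_{1}=z_{1}$, $y_{i}=z_{i}-w_{i}z_{1}$ for $2\leq i\leq n-1$, $y_{n}=z_{n}$, the fibre over $(z_{1}:\cdots:z_{n-1})$ becomes $\{y_{2}=\cdots=y_{n-1}=0\}$; Lemma \ref{l:zero_lelong} then applies, and the null set of exceptional parameters remains of Lebesgue measure zero in $\mathbb{C}\mathbb{P}^{n-2}$.
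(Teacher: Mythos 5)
Your proof is correct in substance and rests on the same three ingredients as the paper's proof: Lemma~\ref{l:zero_lelong} (to confine the positive Lelong set of $u|_{L}$ to $L\cap\bigcup_{j'}H_{j'}$), Lemma~\ref{l:dim_countable_fibre} (to ensure, for a.e.\ fibre, that each $H_{j'}\cap L$ is at most a curve), and Lemma~\ref{l:dim_countable} (to force any hypothetical $1$-dimensional component $C$ of $\{z\in L\,|\,\nu(u|_{L},z)\geq c\}$ to coincide with some $H_{j'_{0}}\cap L$). Your observation that Lemma~\ref{l:zero_lelong} is formulated for the coordinate projection $p_{n}$ rather than for the projective family $\{\overline{p^{-1}(\cdot)}\}$ is well taken, and the chart-by-chart parameter-dependent linear change of coordinates you propose is exactly the right way to transfer the statement; the paper applies the lemma without comment on this point.

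Where you diverge, and do more work than necessary, is in the final contradiction. Once Lemma~\ref{l:dim_countable} pins $C$ down as $H_{j'_{0}}\cap L$, the desired conclusion about the germ at $x_{0}$ is immediate: $H_{j'_{0}}$ does not pass through $x_{0}$ by hypothesis, hence $C=H_{j'_{0}}\cap L$ does not pass through $x_{0}$, and therefore the germ of $\{z\,|\,\nu(u|_{L},z)\geq c\}$ at $x_{0}$ has no $1$-dimensional branch. Your detour through the choice of $U$ disjoint from $\bigcup_{\lambda_{j'}\geq c}H_{j'}$ and the local representation $u=\lambda_{j'_{0}}\log|f_{j'_{0}}|+\text{(pluriharmonic)}$ proves the stronger statement that no $1$-dimensional component meets a fixed neighbourhood of $x_{0}$, which is not needed. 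Moreover, that detour is slightly delicate as written: to get the local representation around a ``generic smooth point'' of $C$ you must also argue that such a point has a whole neighbourhood missing every other $H_{j''}$; since hypersurfaces with $\lambda_{j''}$ tending to $0$ may accumulate, this requires noting that $\bigcup_{j''\neq j'_{0}}(H_{j''}\cap L)$ meets the irreducible curve $C$ only in a countable set, so such points exist. None of this is needed if you stop at ``$H_{j'_{0}}\not\ni x_{0}$, hence $C\not\ni x_{0}$.'' The treatment of the ``moreover'' clause by intersecting the full-measure sets over $c=1/k$ is exactly right.
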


\begin{proof}
By Lemma \ref{l:zero_lelong},
it follows that for almost all $(z_{1}:\cdots:z_{n-1})$ in the sense of the Lebesgue measure on $\mathbb{C}\mathbb{P}^{n-2}$,
$$\{z|\nu(u|_{p^{-1}(z_{1}:\cdots:z_{n-1})},z)=0\}=(\{z|\nu(u,z)=0\}\cap p^{-1}(z_{1}:\cdots:z_{n-1}))$$
holds,
which implies
\begin{equation}
\label{equ:20140702b}
\begin{split}
\{z|\nu(u|_{p^{-1}(z_{1}:\cdots:z_{n-1})},z)\geq c\}\subseteq
&(p^{-1}(z_{1}:\cdots:z_{n-1})\setminus\{z|\nu(u,z)=0\})
\\=&
p^{-1}(z_{1}:\cdots:z_{n-1})\cap(\cup_{c>0}\{z|\nu(u,z)\geq c\})
\end{split}
\end{equation}
for any $c>0$.
Using Lemma \ref{l:dim_countable_fibre},
one can obtain that
for almost all $(z_{1}:\cdots:z_{n-1})$,
the dimension of germ $(p^{-1}(z_{1}:\cdots:z_{n-1})\cap\{z|\nu(u,z)\geq c>0\},x_{0})$
is zero for any $c>0$.
By Lemma \ref{l:dim_countable} ($m=2$, $H=\{z|\nu(u|_{p^{-1}(z_{1}:\cdots:z_{n-1})},z)\geq c\}$, by contradiction) and inequality \ref{equ:20140702b},
it follows that the dimension of $\{z|\nu(u|_{p^{-1}(z_{1}:\cdots:z_{n-1})\cup X},z)\geq c\}$ at $x_{0}$
is zero (for any $c>0$)
for almost all $(z_{1}:\cdots:z_{n-1})$ in the sense of the Lebesgue measure on $\mathbb{C}\mathbb{P}^{n-2}$.
\end{proof}

\subsection{Difference of analytic subvarieties along the fibres (Situation A.2.2.1)}
$\\$

Let $X:=\{z_{1}=\cdots=z_{n-1}=0\}$.
Consider a map $p$ from $\mathbb{C}^{n}\setminus X$ to $\mathbb{C}\mathbb{P}^{n-2}$:
$$p(z_{1},\cdots,z_{n})=(z_{1}:\cdots:z_{n-1}).$$

By contradiction, it is not hard to obtain the following Lemma:

\begin{Lemma}
\label{l:diff_fibre}
Let $H_{1}$ and $H_{2}$ be two different hypersurfaces on $\mathbb{B}^{n}$ satisfying
$H_{1}\cap X=H_{2}\cap X=(0,\cdots,0)\in\mathbb{C}^{n}$.
Then there exists  $(z_{1}:\cdots:z_{n-1})$ in $\mathbb{C}\mathbb{P}^{n-2}$,
such that
$H_{1}\cap p^{-1}(z_{1}:\cdots:z_{n-1})$ and $H_{2}\cap p^{-1}(z_{1}:\cdots:z_{n-1})$ are different.
\end{Lemma}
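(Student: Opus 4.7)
The plan is to argue by contradiction, which the authors already hint at. Assume that for every $w = (z_1 : \cdots : z_{n-1}) \in \mathbb{CP}^{n-2}$ we have the equality of fiberwise intersections
\[
H_1 \cap p^{-1}(w) \;=\; H_2 \cap p^{-1}(w).
\]
The goal is to deduce $H_1 = H_2$, contradicting the hypothesis that $H_1$ and $H_2$ are different hypersurfaces.

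The key observation is that $p$ tautologically realises a partition
\[
\mathbb{C}^{n} \setminus X \;=\; \bigsqcup_{w \in \mathbb{CP}^{n-2}} p^{-1}(w),
\]
so that taking the union of the assumed pointwise equalities over all $w$ gives $H_1 \setminus X = H_2 \setminus X$. Combined with the hypothesis $H_1 \cap X = H_2 \cap X = \{\mathbf{0}\}$, this yields $H_1 = H_2$ as subsets of $\mathbb{B}^n$. The hypersurfaces are in particular closed analytic subsets, so set-theoretic equality is enough here; no closure argument is needed because the union over fibers already covers all of $\mathbb{C}^n \setminus X$. This contradicts the assumption that $H_1$ and $H_2$ are distinct, proving the lemma.

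There is no serious obstacle: the proof is really just unwinding the fibration structure of $p$ on $\mathbb{C}^n \setminus X$. The only mild point to check is that irreducibility (or some positive-dimensional persistence) of $H_i$ need not be invoked, because we are proving equality as sets from equality of the fibers together with the prescribed behaviour on the axis $X$. The hypothesis $H_i \cap X = \{\mathbf{0}\}$ is exactly what prevents the proof from failing due to the $p$-undefined locus.
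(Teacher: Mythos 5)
Your proof is correct, and it is essentially the argument the paper has in mind when it says ``By contradiction, it is not hard to obtain the following Lemma'' and leaves the verification to the reader. The key point — that the fibers of $p$ partition $\mathbb{B}^n\setminus X$, so fiberwise agreement forces $H_1\setminus X = H_2\setminus X$, which combined with $H_1\cap X = H_2\cap X = \{\mathbf{0}\}$ gives $H_1 = H_2$ as sets — is exactly right, and your remark that no closure argument is needed (because the fibers already exhaust the complement of $X$) correctly identifies the only place one might otherwise worry. One could add explicitly that in the later application (Siu's decomposition) the $H_j$ are distinct as analytic \emph{sets}, so set-theoretic equality is the right notion of ``same'' here and no multiplicity issue arises, but this is a cosmetic point, not a gap.
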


\subsection{Singularity of analytic subvariety along the fibres (Situation A.2.2.2)}
$\\$

\begin{Lemma}
\label{l:regular}
Let $p_{n}$ be the same mapping as in subsection \ref{sec:level_set}.
Let $H_{0}=\{h_{0}=0\}$ ($h$ is the defining function) be a regular complex hypersurface on $\Delta^{n}$ through $x$.
Assume that $p_{n}(x)$ is not the critical value of $p_{n}|_{H_{0}}: H_{0}\to \Delta^{n-2}$.
Then
$dh|_{p_{n}^{-1}(p_{n}(x))}$ is not varnishing at $x$.
\end{Lemma}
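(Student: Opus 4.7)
The plan is to translate both hypotheses into statements about the relevant tangent spaces at $x$ and then observe that they are directly incompatible if the conclusion fails, so the proof is essentially linear algebra on tangent vectors with no deeper input required.

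First, I would set up notation. Write $F := p_n^{-1}(p_n(x))$ for the fiber of $p_n$ through $x$; since $p_n$ is the linear projection onto the first $n-2$ coordinates, $F$ is a $2$-dimensional complex submanifold of $\Delta^n$ and
\[
T_xF \;=\; \ker(dp_n|_x) \;=\; \mathrm{span}\bigl(\partial_{z_{n-1}},\partial_{z_n}\bigr).
\]
Since $H_0$ is a regular hypersurface with defining function $h_0$, the tangent space is $T_xH_0 = \ker(dh_0|_x)$, and $dh_0|_x \ne 0$.

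Next I would rephrase the two hypotheses. The assumption that $p_n(x)$ is not a critical value of $p_n|_{H_0}$ means that $d(p_n|_{H_0})_x : T_xH_0 \to T_{p_n(x)}\Delta^{n-2}$ is surjective, which is the same as saying
\[
T_xH_0 + T_xF \;=\; T_x\Delta^n.
\]
On the other hand, the conclusion we want, namely that $dh_0|_F$ is nonvanishing at $x$, is equivalent to $T_xF \not\subset \ker(dh_0|_x) = T_xH_0$.

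Now I would finish by contradiction. If $dh_0|_F$ vanished at $x$, then $T_xF \subset T_xH_0$, and hence $T_xH_0 + T_xF = T_xH_0$ would have complex dimension $n-1$, contradicting the equality $T_xH_0 + T_xF = T_x\Delta^n$ (which has dimension $n$) that the non-critical-value hypothesis forces. This contradiction yields the lemma. There is no real obstacle here: the only thing to be careful about is making the identification $T_xF = \ker(dp_n|_x)$ explicit so that "non-critical value" is genuinely equivalent to the transversality statement $T_xH_0 + T_xF = T_x\Delta^n$; everything else is a one-line dimension count.
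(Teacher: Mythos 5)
Your proof is correct and matches the paper's own argument in essence: both translate the non‑critical‑value hypothesis into a transversality statement on tangent spaces at $x$ and then derive a contradiction from $T_xF\subset T_xH_0$. The only difference is cosmetic — you make the dimension count $T_xH_0+T_xF=T_xH_0$ explicit, whereas the paper phrases the same failure as degeneracy of $(p_n)_\star T_xH_0$.
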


\begin{proof}
We prove the present Lemma by contradiction:
if $dh|_{p_{n}^{-1}(p_{n}(x))}$ is {not} varnishing at $x$,
then the linear space of $T_{x}\{h=0\}\subset T_{x}\Delta^{n}$ contains $T_{x}p_{n}^{-1}(p_{n}(x))\subset T_{x}\Delta^{n}$,
i.e.
$(p_{n}|_{x})_{\star}T_{x}H_{0}$ is degenerate,
which contradicts {the fact that} $p_{n}(x)$ is not the critical value of $p_{n}|_{H_{0}}$.
\end{proof}

Let $X:=\{z_{1}=\cdots=z_{n-1}=0\}$.
Consider a map $p$ from $\mathbb{C}^{n}\setminus X$ to $\mathbb{C}\mathbb{P}^{n-2}$:
$$p(z_{1},\cdots,z_{n})=(z_{1}:\cdots:z_{n-1}).$$

Let $H=\{h=0\}$ be a hypersurface on $\mathbb{B}^{n}$,
which is singular at $(0,\cdots,0)$,
where $h$ is the defining function of $H$, such the restriction $dh$ of any component of the set of regular points in $H$ is not varnishing identically.
Assume that $X\cap H=(0,\cdots,0)$.

\begin{Lemma}
\label{l:sing_fibre}If there exists a point $(z_{1}:\cdots:z_{n-1})$ satisfying that $p^{-1}(z_{1}:\cdots:z_{n-1})\cap H_{reg}$ ($H_{reg}:=\{z|dh|_{z}\neq0\}$) is dimension $1$,
then for a positive measure of $(z_{1}:\cdots:z_{n-1})$
in the sense of the Lebesgue measure on $\mathbb{C}\mathbb{P}^{n-2}$, $h_{z'}(:=h|_{p^{-1}(z_{1}:\cdots:z_{n-1})\cup X})$ satisfies:

$(1)$ $h_{z'}$ is singular at $\textbf{0}:=(0,\cdots,0)\in\mathbb{C}^{n}$;

$(2)$ $dh$ is not varnishing identically on any open subset of $p^{-1}(z_{1}:\cdots:z_{n-1})\cap H$.
\end{Lemma}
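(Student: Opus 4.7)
My plan is to exhibit two analytic subvarieties $Z_{1}, Z_{2} \subsetneq \mathbb{CP}^{n-2}$ of Lebesgue measure zero and to check that both (1) and (2) hold for every $z' \in \mathbb{CP}^{n-2} \setminus (Z_{1} \cup Z_{2})$. Since the complement then has full Lebesgue measure (in particular positive), this will conclude the proof.

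For (1), note that since $H$ is singular at $\mathbf{0}$, the defining function $h$ vanishes to order at least $2$ there, so $dh|_{\mathbf{0}} = 0$; restricting to the $2$-plane $L_{z'} := p^{-1}(z') \cup X$ yields $d(h_{z'})|_{\mathbf{0}} = 0$ for every $z'$. Thus the only obstruction to $\{h_{z'} = 0\}$ being singular at $\mathbf{0}$ is the trivial case $h_{z'} \equiv 0$, equivalently $L_{z'} \subseteq H$. I set
\[
Z_{1} := \{\, z' \in \mathbb{CP}^{n-2} : L_{z'} \subseteq H \,\}.
\]
This is an analytic subvariety, and the lemma's hypothesis supplies a $z'_{0}$ with $\dim(p^{-1}(z'_{0}) \cap H_{\mathrm{reg}}) = 1$, forcing $L_{z'_{0}} \not\subseteq H$ (otherwise the intersection with $H_{\mathrm{reg}}$ would be $2$-dimensional); hence $Z_{1}$ is a proper subvariety and so has measure zero.

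For (2), the singular locus $H_{\mathrm{sing}}$ is analytic of dimension $\leq n-2$, and the standing hypothesis $X \cap H = \{\mathbf{0}\}$ forces $X \cap H_{\mathrm{sing}} = \{\mathbf{0}\}$. Applying Lemma~\ref{l:dimen_fibre_zero} to each irreducible component of $H_{\mathrm{sing}}$ (of codimension $k \geq 2$, giving generic fibre dimension $2 - k \leq 0$), the set
\[
Z_{2} := \{\, z' \in \mathbb{CP}^{n-2} : \dim_{\mathbf{0}}(L_{z'} \cap H_{\mathrm{sing}}) \geq 1 \,\}
\]
is likewise of Lebesgue measure zero in $\mathbb{CP}^{n-2}$.

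For any $z' \in \mathbb{CP}^{n-2} \setminus (Z_{1} \cup Z_{2})$: since $L_{z'} \not\subseteq H$, the intersection $L_{z'} \cap H$ is the zero locus of $h_{z'} \not\equiv 0$ in $L_{z'}$, hence of pure dimension $1$ at $\mathbf{0}$; combined with $d(h_{z'})|_{\mathbf{0}} = 0$, this yields (1). Moreover $\dim_{\mathbf{0}}(L_{z'} \cap H_{\mathrm{sing}}) \leq 0$, so $L_{z'} \cap H_{\mathrm{reg}}$ is open dense in $L_{z'} \cap H$ near $\mathbf{0}$, yielding (2). The crux of the argument is the measure-zero bound on $Z_{2}$: one must pass from a dimension bound on $H_{\mathrm{sing}}$ itself to a dimension bound on its intersection with a generic member of the family of $2$-planes $\{L_{z'}\}$, which is exactly what Lemma~\ref{l:dimen_fibre_zero} supplies once the condition $X \cap H_{\mathrm{sing}} = \{\mathbf{0}\}$ is in place.
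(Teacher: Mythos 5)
Your proof takes a genuinely different route from the paper's. Where the paper applies Sard's theorem to $p|_{H_{\mathrm{reg}}\setminus X}$ and then uses Lemma~\ref{l:regular} to conclude that the fibre curve $p^{-1}(z')\cap H_{\mathrm{reg}}$ is \emph{smooth} for $z'\in U\setminus A_{\mathrm{crit}}$, you instead apply the Hausdorff-measure dimension count of Lemma~\ref{l:dimen_fibre_zero} to the singular locus $H_{\mathrm{sing}}$ (and to $Z_{1}$, which in fact is empty, since $X\subset L_{z'}$ while $X\cap H=\{\mathbf{0}\}$). Your argument correctly shows that for almost every $z'$ the set $L_{z'}\cap H_{\mathrm{sing}}$ is discrete, hence $dh$ --- the full ambient differential --- does not vanish identically on any open piece of $L_{z'}\cap H$, which is the literal wording of $(2)$.

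However, there is a gap relative to what the paper actually establishes and uses. The paper's Sard argument proves the strictly stronger fact that the \emph{restricted} differential $d\bigl(h|_{L_{z'}}\bigr)$ is nonvanishing at every point of $p^{-1}(z')\cap H_{\mathrm{reg}}$. Your argument does not rule out the possibility that the $2$-plane $L_{z'}$ is tangent to $H$ along a curve of regular points: at such points $dh\neq 0$ but $\ker(dh)\supseteq T L_{z'}$, so $h_{z'}$ acquires a factor of multiplicity $\geq 2$ even though the curve avoids $H_{\mathrm{sing}}$. Ruling this out is precisely the role of Sard's theorem in the paper, and it is exactly what feeds Remark~\ref{r:sing_fibre_06} (and hence Situation~A.2.2.2): one needs the level set $\{z\mid\nu(dd^{c}\log|h_{z'}|,z)\geq c\}$ to be $0$-dimensional away from $\mathbf{0}$, which is a statement about the multiplicity of $h_{z'}$, not about $H_{\mathrm{sing}}$. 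So while your argument matches the literal wording of $(2)$, it misses the transversality ingredient that is the real content of the lemma; replacing the $Z_{2}$ step by Sard applied to $p|_{H_{\mathrm{reg}}\setminus X}$ (or supplementing it) would close the gap.
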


\begin{Remark}
\label{r:sing_fibre_06}
It is clear that the complex dimension of
$\{z|\nu(dd^{2}\log|h_{z'}|,z)\geq\nu(dd^{2}\log|h_{z'}|),\textbf{0}\}$
is zero for a positive measure of $(z_{1}:\cdots:z_{n-1})$
in the sense of the Lebesgue measure on $\mathbb{C}\mathbb{P}^{n-2}$.
\end{Remark}

\begin{proof}(proof of Lemma \ref{l:sing_fibre})
Note that $dh|_{(0,\cdots,0)}$ is varnishing,
then it suffices to prove that $dh|_{p^{-1}(z_{1}:\cdots:z_{n-1})\cap H}$ is not varnishing identically on any open subset of $p^{-1}(z_{1}:\cdots:z_{n-1})\cap H$.

As there exists a point $(z_{1}:\cdots:z_{n-1})$ satisfying that $p^{-1}(z_{1}:\cdots:z_{n-1})\cap H_{reg}$ is dimension $1$,
then there exists an open subset $U$ of $\mathbb{C}\mathbb{P}^{n-2}$ contained in the image $p(H_{reg}\setminus X)$.
By Sard's Theorem,
it follows that Lebesgue measure of the critical value set $A_{crit}$ of $p|_{H_{reg}\setminus X}$ is zero on $\mathbb{C}\mathbb{P}^{n-2}$.
By Lemma \ref{l:regular},
it follows that
$p^{-1}(z_{1}:\cdots:z_{n-1})\cap H_{reg}$ is smooth for any point $(z_{1}:\cdots:z_{n-1})\in(U\setminus A_{crit})$.
Note that $U\setminus A_{crit}$ has positive measure  on $\mathbb{C}\mathbb{P}^{n-2}$,
then we prove the present Lemma.
\end{proof}

\begin{Remark}
\label{r:sing_fibre}Assume that for all point $(z_{1}:\cdots:z_{n-1})$ satisfying that $p^{-1}(z_{1}:\cdots:z_{n-1})\cap H_{reg}\neq\emptyset$,
the complex dimension of $p^{-1}(z_{1}:\cdots:z_{n-1})\cap H$ is $2$, i.e. $(p^{-1}(z_{1}:\cdots:z_{n-1})\cup X)\subset H$.
Then the dimension of $p^{-1}(z_{1}:\cdots:z_{n-1})\cap H$ is $0$ for almost all $(z_{1}:\cdots:z_{n-1})$
in the sense of the Lebesgue measure on $\mathbb{C}\mathbb{P}^{n-2}$.
\end{Remark}

$\\$
\textbf{Proof of Theorem \ref{t:Lelong_lct}:}
$\\$

Without {loss} of generality, we assume that $x_{0}=\textbf{0}=(0,\cdots,0)\in\mathbb{C}^{n}$ and $u$ is negative.

Using Siu's decomposition,
we have
$$dd^{c}u=\sum_{j}\lambda_{j}[H_{j}]+\sum_{j'}\lambda_{j'}[H_{j'}]+S,$$
where $\lambda_{j},\lambda_{j'}>0$, $H_{j}$ is through $\textbf{0}$, and $H_{j'}$ is not through $\textbf{0}$
(the dimension of $\{\nu(u_{0}+u_{A'},z)\geq c\}$ at $\textbf{0}$ is small than $n-1$ for any $c>0$).

By Lemma \ref{l:composition},
it follows that there exist plurisubharmonic functions $u_{A}$, $u_{A'}$ and $u_{0}$,
such that

$(1)$ $u=u_{A}+u_{A'}+u_{0}$;

$(2)$ $dd^{c}u_{A}=\sum_{j}\lambda_{j}[H_{j}]$;

$(3)$ $dd^{c}u_{A'}=\sum_{j'}\lambda_{j'}[H_{j'}]$;

$(4)$ $dd^{c}u_{0}=S$.
$\\$

If $\{\nu(u,z)\geq 1\}$ is not a regular complex hypersurface through $\textbf{0}$, then there are two situations:

$(A.1)$ $\nu(u_{0},\textbf{0})>0$;

$(A.2.)$ $\nu(u_{0},\textbf{0})=0$ ($\Leftrightarrow\nu(u_{A'}+u_{A},\textbf{0})=1$).

By Lemma \ref{l:lelong_fibre},
it follows that one can assume that $\nu(u|_{X},\textbf{0})=1$.
Then we obtain that
$$1\leq\nu(u|_{X},\textbf{0})\leq\nu(u|_{p^{-1}(z_{1}:\cdots:z_{n-1})\cup X},\textbf{0})\leq\nu(u,\textbf{0})=1,$$
which shows that
$$\nu(u|_{p^{-1}(z_{1}:\cdots:z_{n-1})\cup X},\textbf{0})=1$$
for any $(z_{1}:\cdots:z_{n-1})\in\mathbb{C}\mathbb{P}^{n-2}$.

\subsection{Situation (A.1)}
$\\$

By Lemma \ref{l:Holder} ($u_{1}=\frac{u_{0}}{\nu(u_{0},\textbf{0})}$, $u_{2}=u_{A}+u_{A'}$, $a_{1}=\nu(u_{0},\textbf{0})$),
it suffices to prove that $u=u_{0}$ ($\nu(u_{0},\textbf{0})=1$).
Using Corollary \ref{cor:dimen_fibre_zero_n-2},
there exists a complex plane $V=(p^{-1}(z_{1}:\cdots:z_{n})\cup X)$ (dimension 2) through $\textbf{0}$,
such that dimension of $\{\nu(u|_{V},z)\geq c\}$ at $\textbf{0}$ is zero for any $c>0$.

By Theorem \ref{t:Lelong_lct_dim2} and Remark \ref{r:integ_line},
it follows that Situation (A.1) {is solved}.

\subsection{Situation (A.2)}
$\\$

By Lemma \ref{l:Holder} ($u_{1}=\frac{u_{A}+u_{A'}}{\nu(u_{A}+u_{A'},\textbf{0})}$, $u_{2}=u_{0}$, $a_{1}=\nu(u_{A}+u_{A'},\textbf{0})$),
it suffices to consider $u_{0}=0$:

$(A.2.1)$ $\nu(u_{A'},\textbf{0})>0$;

$(A.2.2.)$ $\nu(u_{A'},\textbf{0})=0$ ($\Leftrightarrow\nu(u_{A},\textbf{0})=1$)

\subsubsection{Situation (A.2.1)}

By Lemma \ref{l:Holder} ($u_{1}=\frac{u_{A'}}{\nu(u_{A'},\textbf{0})}$, $u_{2}=u_{A}$, $a_{1}=\nu(u_{A'},\textbf{0})$),
it suffices to prove that $u=u_{A'}$ ($\nu(u_{A'},\textbf{0})=1$).
Using Corollary \ref{cor:dimen_fibre_zero_n-1},
there exists a complex plane $V=(p^{-1}(z_{1}:\cdots:z_{n})\cup X)$ (dimension 2) through $\textbf{0}$,
such that dimension of $\{\nu(u|_{V},z)\geq c\}$ at $\textbf{0}$ is zero for any $c>0$.

By Theorem \ref{t:Lelong_lct_dim2} and Remark \ref{r:integ_line},
it follows that Situation (A.2.1) {is solved}.

\subsubsection{Situation (A.2.2)}
$\\$

By Lemma \ref{l:Holder} ($u_{1}=\frac{u_{A}}{\nu(u_{A},\textbf{0})}$, $u_{2}=u_{A'}$, $a_{1}=\nu(u_{A},\textbf{0})$),
it suffices to prove that $u=u_{A}$:

$(A.2.2.1)$ there exists $j_{1}$ and $j_{2}$, such that $H_{j_1}$ and $H_{j_2}$ are different hypersurfaces;

$(A.2.2.2)$ there exists only one $H_{j}$ which is singular at $\textbf{0}$, such that $\nu(\lambda_{j}[H_{j}],\textbf{0})=1$.

\subsubsection{Situation (A.2.2.1)}
$\\$

By Lemma \ref{l:Holder}($dd^{c}u_{1}=
\frac{\lambda_{1}[H_{1}]+\lambda_{2}[H_{2}]}{\nu(\lambda_{1}[H_{1}]+\lambda_{2}[H_{2}],\textbf{0})}$, $a_{1}=\nu(\lambda_{1}[H_{1}]+\lambda_{2}[H_{2}],\textbf{0})$),
it suffices to prove that $dd^{c}u=\lambda_{1}[H_{1}]+\lambda_{2}[H_{2}]$.

By Lemma \ref{l:lelong_fibre},
it follows that one can assume that $\nu(u|_{X},x_{0})=1$,
and $H_{1}\cap X=H_{2}\cap X=x_{0}$ by choosing neighborhood of $x_{0}$ small enough.

Using Lemma \ref{l:diff_fibre},
we can obtain a point $(z_{1},\cdots,z_{n})\in\mathbb{C}\mathbb{P}^{n-2}$,
such that $(p^{-1}(z_{1},\cdots,z_{n})\cup X)\cap H_{j_{i}}$ $(i=1,2.)$ are different curves.
Using Theorem \ref{t:Lelong_lct_dim2} and Remark \ref{r:integ_line},
then it follows that Situation (A.2.2.1) {is solved}.

\subsubsection{Situation (A.2.2.2)}
$\\$

By Lemma \ref{l:Holder}($dd^{c}u_{1}=
\frac{\lambda_{0}[H_{0}]}{\nu(\lambda_{0}[H_{0}],\textbf{0})}$,
$a_{1}=\nu(\lambda_{0}[H_{0}],\textbf{0})$),
it suffices to prove that $dd^{c}u=\lambda_{0}[H_{0}]$, which is singular at $\textbf{0}$.

By Lemma \ref{l:lelong_fibre},
it follows that one can assume that $\nu(u|_{X},x_{0})=1$,
and $H_{0}\cap X=x_{0}$ by choosing neighborhood of $x_{0}$ small enough.

Using Lemma \ref{l:sing_fibre} and Remark \ref{r:sing_fibre},
we can obtain a point $(z_{1},\cdots,z_{n})\in\mathbb{C}\mathbb{P}^{n-2}$,
such that $(p^{-1}(z_{1},\cdots,z_{n})\cup X)\cap H_{j}$ is a singular curve,
or discrete points.
Using Theorem \ref{t:Lelong_lct_dim2} and Remark \ref{r:integ_line},
then it follows that Situation (A.2.2.2) {is solved}.
$\\$

Combining with Situations (A.1) and (A.2), Theorem \ref{t:Lelong_lct} {is solved}.

\section{A new proof of Theorem \ref{t:skoda}}

In this section, we give a new proof of Theorem \ref{t:skoda} by using the Ohsawa-Takegoshi $L^{2}$  extension theorem
{in a "dynamical manner"}.

\subsection{Estimation of $L^2$ norm of holomorphic functions on unit disc}
$\\$

We recall a Lemma which was used
in \cite{GZopen-a,GZopen-b} to prove the strong openness conjecture:

\begin{Lemma}
\label{l:open_b}(see \cite{GZopen-a,GZopen-b})
Let $f_{a}$ be a holomorphic function on unit disc $\Delta\subset\mathbb{C}$,
which satisfies
$f|_{o}=0$ and $f_{a}(a)=1$ for any $a$,
then we have
$$\int_{\Delta_{r}}|f_{a}|^{2}d\lambda_{1}>C_{1}|a|^{-2},$$
where $a\in\Delta$ whose norm is smaller than $\frac{1}{6}$,
$C_{1}$ is a positive constant independent of $a$ and $f_{a}$.
\end{Lemma}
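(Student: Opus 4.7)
The vanishing condition $f_a(0) = 0$ allows us to factor out a zero at the origin: write $f_a(z) = z \, g_a(z)$ with $g_a$ holomorphic on $\Delta$. The normalization $f_a(a) = 1$ then becomes $g_a(a) = 1/a$, so $|g_a(a)|^2 = 1/|a|^2$. This is where the factor $|a|^{-2}$ will ultimately come from.

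Next I would apply the sub-mean value inequality for the subharmonic function $|g_a|^2$ on a disc centered at $a$, crucially of \emph{fixed} radius $R$ independent of $a$. Concretely, choose $R = 1/3$; since $|a| < 1/6$, the disc $D(a, R)$ sits inside $\Delta_{1/2}$, so that (for any $r \geq 1/2$) $D(a, R) \subset \Delta_r$. The sub-mean inequality
$$|g_a(a)|^2 \;\leq\; \frac{1}{\pi R^2} \int_{D(a, R)} |g_a(z)|^2 \, d\lambda_1(z)$$
rearranges to $\int_{D(a, R)} |g_a|^2 \, d\lambda_1 \geq \pi R^2/|a|^2$, which is already of the desired order $|a|^{-2}$.

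To convert this into a bound on $\int |f_a|^2 = \int |z|^2 |g_a|^2$, I would estimate $|z|$ from below on the disc $D(a, R)$. For $|a| < 1/6$ and $R = 1/3$, every $z \in D(a, R)$ satisfies $|z| \geq R - |a| \geq 1/6$, so $|z|^2 \geq 1/36$. Combining,
$$\int_{\Delta_r} |f_a|^2 \, d\lambda_1 \;\geq\; \int_{D(a, R)} |z|^2 |g_a|^2 \, d\lambda_1 \;\geq\; \frac{1}{36} \cdot \frac{\pi (1/3)^2}{|a|^2} \;=\; \frac{\pi}{324 \, |a|^2},$$
so one can take $C_1 = \pi/324$ (and $r = 1/2$, say).

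The main subtlety to watch for is the choice of radius. The naive temptation is to take a disc of radius $|a|/2$ centered at $a$, on which $|z| \geq |a|/2$, but then the sub-mean bound $\int |g_a|^2 \geq \pi |a|^2/(4|a|^2) = \pi/4$ only yields $\int |f_a|^2 \gtrsim |a|^2$, which is the wrong order. The key insight is that because $|g_a(a)|$ blows up like $1/|a|$, one must integrate on a disc of \emph{fixed} (macroscopic) size to benefit from the sub-mean inequality; the mild loss from $|z|^2 \geq (R - |a|)^2$ being small is then harmless as long as $|a|$ is bounded away from $R$.
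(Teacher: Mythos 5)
The paper itself does not prove this lemma; it only cites \cite{GZopen-a,GZopen-b}, so there is no in-text proof to compare against. Judged on its own merits, your argument has a genuine and fatal gap at the final step.

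You factor $f_a = z\,g_a$, apply the sub-mean value inequality to $|g_a|^2$ on the \emph{fixed} disc $D(a,R)$ with $R = 1/3$, obtain $\int_{D(a,R)}|g_a|^2 \geq \pi R^2/|a|^2$, and then claim that every $z \in D(a,R)$ satisfies $|z| \geq R - |a| \geq 1/6$. This last inequality is false. Since $|a| < 1/6 < R$, the origin lies \emph{inside} $D(a,R)$, so $\inf_{z\in D(a,R)}|z| = 0$. The triangle inequality gives $|z| \geq R - |a|$ only for $z$ \emph{outside} the disc (where $|z-a| \geq R$); on the disc itself one only has the useless upper bound $|z| \leq |a| + R$. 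Thus the pointwise lower bound on the weight $|z|^2$, which was supposed to produce a uniform constant, collapses. And this cannot be patched by excising a neighborhood of the origin from $D(a,R)$: a priori, an arbitrarily large fraction of the mass of $|g_a|^2$ could concentrate near $0$, and the unweighted sub-mean estimate gives no control over that.

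You yourself correctly diagnose the tension (a tiny disc of radius $\sim |a|$ keeps $|z|$ away from $0$ but delivers the wrong power of $|a|$; a macroscopic disc delivers the right power but, as it turns out, swallows the origin). The resolution is not to estimate the weight $|z|^2$ pointwise at all, but to work directly in the \emph{weighted} Bergman space $A^2(\Delta_r, |z|^2\,d\lambda)$. There the monomials $z^k$ remain orthogonal, $\|z^k\|^2 = \pi r^{2k+4}/(k+2)$, and the reproducing-kernel bound (equivalently, Cauchy--Schwarz against the power-series expansion $g_a = \sum c_k z^k$ with $\sum c_k a^k = 1/a$) yields
\begin{equation*}
\int_{\Delta_r}|f_a|^2\,d\lambda = \int_{\Delta_r}|z|^2|g_a|^2\,d\lambda \;\geq\; \frac{|g_a(a)|^2}{\sum_{k\geq 0}\frac{(k+2)|a|^{2k}}{\pi r^{2k+4}}} \;=\; \frac{1}{|a|^2}\cdot\frac{\pi r^4(1-|a|^2/r^2)^2}{\,2-|a|^2/r^2\,},
\end{equation*}
which for $r = 1/2$ and $|a| < 1/6$ gives the desired bound with $C_1 = 2\pi/81$. (Equivalently one can expand $f_a = \sum_{k\geq 1} b_k z^k$ with $\sum b_k a^k = 1$ and apply Cauchy--Schwarz directly.) Your factorization $f_a = z\,g_a$ is exactly the right first move, and the appeal to a reproducing/mean-value principle on a disc of fixed size is the right instinct, but the weight must enter the functional-analytic estimate itself rather than be pulled out via a sup-norm bound that does not hold.
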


\subsection{Using {the} $L^2$ extension theorem {dynamically} along {the} radius}
$\\$

In this subsection,
we use {the} $L^2$ extension theorem {dynamically} along {the} radius and obtain the following result:

\begin{Proposition}
\label{p:movable_radius}
 Let $u$ be a plurisubharmonic function on the unit ball $B(x_{0},1)\subset\mathbb{C}^{n}$
satisfying that $e^{-2u}$ is not integrable near $x_{0}$.
Then for any $z_{2}=(z^{1}_{2},z^{2}_{2},\cdots,z^{n}_{2})$ satisfying $|z_{2}|=1$,
we have
\begin{equation}
\label{equ:20140603b}
u(rz_{2}+x_{0})< C_{3}+\log r
\end{equation}
holds for any $r<\frac{1}{4}$,
where $C_{3}$ is a constant independent of $r$ and $z_{2}$.
If $u$ is negative,
then $C_{3}$ is independent of $u$.
\end{Proposition}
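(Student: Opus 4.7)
The plan is to reduce Proposition \ref{p:movable_radius} to a one-dimensional statement by restricting $u$ to the complex line $L$ through $x_0$ and the target point $z_3 := x_0 + rz_2$, and then to apply the Ohsawa--Takegoshi extension theorem in the form of Remark \ref{r:point} at the single point $z_3$ on that disc. This is the ``dynamic along the radius'' viewpoint: the extension is performed at a moving point $z_3(r)$ rather than a fixed center. Parametrize $L$ as $\zeta \mapsto x_0 + \zeta z_2$ with $\zeta$ in the unit disc $\Delta$, and set $v(\zeta) := u(x_0 + \zeta z_2)$. By Remark \ref{r:integ_line}, the non-integrability of $e^{-2u}$ near $x_0$ forces $e^{-2v}$ to be non-integrable near $0 \in \mathbb{C}$, so from the very start we are in a genuinely one-dimensional weight problem. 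We may of course assume $v(r)>-\infty$, otherwise the estimate is trivial.

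Next I would apply Remark \ref{r:point} on $\Delta$ with $z_3 := r \in \Delta$ and $f \equiv 1$ to produce a holomorphic function $F$ on $\Delta$ with $F(r)=1$ and
\[
\int_{\Delta}|F|^{2}e^{-2v}\,d\lambda_{1}\;\leq\;C_{\Delta}\,e^{-2v(r)},
\]
where $C_{\Delta}$ is a universal constant. The crucial observation is that because $\int_{\Delta}|F|^{2}e^{-2v}<+\infty$ while $e^{-2v}$ is non-integrable near $0$, the function $F$ must vanish at $0$: otherwise $|F|^{2}\geq c>0$ on some neighborhood of $0$, forcing divergence of the weighted integral. Thus $F$ satisfies the hypotheses of Lemma \ref{l:open_b} with $a=r$, provided $r<\tfrac{1}{6}$, yielding the lower bound $\int_{\Delta_{r_{0}}}|F|^{2}\,d\lambda_{1}\geq C_{1}r^{-2}$ for some fixed sub-disc $\Delta_{r_{0}}$.

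To close the loop I would use negativity of $u$ (hence of $v$) to get $e^{-2v}\geq 1$ on $\Delta$, so the unweighted lower bound upgrades to
\[
C_{1}r^{-2}\;\leq\;\int_{\Delta}|F|^{2}e^{-2v}\,d\lambda_{1}\;\leq\;C_{\Delta}\,e^{-2v(r)},
\]
and taking logarithms gives $v(r)\leq \log r+\tfrac{1}{2}\log(C_{\Delta}/C_{1})$. This is exactly the desired inequality with $C_{3}:=\tfrac{1}{2}\log(C_{\Delta}/C_{1})+\varepsilon$ (the $\varepsilon$ is there to promote $\leq$ to the strict $<$ in the statement), and $C_{3}$ is indeed independent of $u$, $r$ and $z_{2}$ when $u$ is negative. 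For general plurisubharmonic $u$ one works with $u-M$ where $M$ is an upper bound of $u$ on the ball, absorbing $M$ into $C_{3}$.

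The main obstacle I anticipate is the mismatch between the range $r<1/4$ asserted in the proposition and the range $r<1/6$ where Lemma \ref{l:open_b} directly applies; I would handle $r\in[\tfrac{1}{6},\tfrac{1}{4})$ trivially by enlarging $C_{3}$ so that $C_{3}+\log r\geq 0\geq u(rz_{2}+x_{0})$ on that shell, using negativity (or the upper bound) of $u$. The other technical point is justifying that the extension $F$ on the line really must vanish at $0$; this is a short continuity argument but worth stating carefully because it is the sole place where the hypothesis ``$e^{-2u}$ not integrable near $x_{0}$'' enters.
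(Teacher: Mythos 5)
Your proposal is correct and is essentially the paper's own argument: restrict $u$ to the complex line through $x_0$ and $x_0+z_2$, invoke Remark \ref{r:point} (Ohsawa--Takegoshi at the moving point $rz_2$) to get $F$ with $F(rz_2)=1$ and a weighted $L^2$ bound $\leq C_D e^{-2u(rz_2+x_0)}$, deduce $F(x_0)=0$ from the non-integrability of the restricted weight (Remark \ref{r:integ_line}), then apply Lemma \ref{l:open_b} and chain the inequalities to get $u(rz_2+x_0)<\log r+\tfrac12\log(C_D/C_1)$. Your additional remarks — the explicit continuity argument for $F(x_0)=0$, the patch for the range $[\tfrac16,\tfrac14)$, and the reduction to negative $u$ via an upper bound — merely make explicit the steps the paper leaves implicit, and do not change the route.
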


\begin{proof}
It suffices to consider the case that $u<0$.

Let $H$ be a complex line through $x_{0}$ and $z_{2}+x_{0}$,
then $D:=H\cap B(x_{0},1)$ is a unit disc.
Using Remark \ref{r:point},
we obtain a holomorphic function $F_{rz_{2}}$ on $D$ satisfying
$F_{rz_{2}}(rz_{2})=1$ and
\begin{equation}
\label{equ:20140512a}
\int_{D}|F_{rz_{2}}|^{2}e^{-2u}d\lambda_{1}\leq C_{D} e^{-2u(rz_{2}+x_{0})}.
\end{equation}

As $u<0$,
it follows that
\begin{equation}
\label{equ:20140512b}
\int_{D}|F_{rz_{2}}|^{2}d\lambda_{1}<\int_{D}|F_{rz_{2}}|^{2}e^{-2u}d\lambda_{1}.
\end{equation}

Using Remark \ref{r:integ_line}, we obtain $F_{rz_{2}}|_{x_{0}}=0$.
By Lemma \ref{l:open_b} $(a=rz_{2},$ and $f_{a}=F_{rz_{2}}|_{D})$,
it follows that
\begin{equation}
\label{equ:20140512c}
C_{1}|r|^{-2}<\int_{D}|F|^{2}d\lambda_{1}.
\end{equation}

By inequalities \ref{equ:20140512a}, \ref{equ:20140512b} and \ref{equ:20140512c},
it follows that
\begin{equation}
\label{equ:20140512d}
C_{1}|r|^{-2}<C_{D} e^{-2u(rz_{2}+x_{0})},
\end{equation}
i.e.
\begin{equation}
\label{equ:20140512e}
u(rz_{2}+x_{0})<\log r+\frac{\log C_{D}-\log C_{1}}{2}.
\end{equation}
Note that $C_{D}$ and $C_{1}$ are both uniform constant,
then proposition \ref{p:movable_radius} {is proved}.
\end{proof}

By inequality \ref{equ:20140603b}, it follows that
\begin{equation}
\label{equ:20140603c}
\liminf_{r\to 0}\frac{u(rz_{2}+x_{0})}{\log r}\geq 1
\end{equation}
for any $z_{2}$, which implies
\begin{equation}
\label{equ:20140603d}
\nu(u|_{L},x_{0})\geq 1,
\end{equation}
for any complex line through $x_{0}$.
Then Theorem \ref{t:skoda} follows from Proposition \ref{p:movable_radius} (by contradiction).

%

\bibliographystyle{references}
\bibliography{xbib}

\end{document}